\newtheorem{theorem}{Theorem}[section]
\newtheorem{lemma}[theorem]{Lemma}
\newtheorem{proposition}[theorem]{Proposition}
\theoremstyle{definition}
\newtheorem{definition}{Definition}
\newtheorem{remark}[theorem]{Remark}
\numberwithin{equation}{section}
\def\CC{\mathbb{C}} 
\def\ric{\mathrm{Ric}}
\def\ke{K\"ahler-Einstein }
\newcommand{\paren}[1]{\left(#1\right)}
\newcommand{\abs}[1]{\left\vert#1\right\vert}
\newcommand{\norm}[1]{\left\|#1\right\|}
\newcommand{\ov}[1]{\overline{#1}}
\begin{document}
\title[Holomophic family of strongly pseudoconvex domains]{Holomophic family of strongly pseudoconvex domains in a K\"{a}hler manifold}

\author{Young-Jun Choi}
\address{Department of Mathematics, Pusan National University, 2, Busandaehak-ro 63beon-gil, Geumjeong-gu, Busan 46241, Republic of Korea}
\email{youngjun.choi@pusan.ac.kr}

\author{Sungmin Yoo}
\address{School of Mathematics, Korea Institute for Advanced Study(KIAS), 85 Hoegiro, Dongdaemun-gu, Seoul 02455, Republic of Korea}
\email{sungmin@kias.re.kr}

\begin{abstract}
Let $p:X\rightarrow Y$ be a surjective holomorphic mapping between K\"ahler manifolds. 
Let $D$ be a bounded smooth domain in $X$ such that every generic fiber $D_y:=D\cap p^{-1}(y)$ for $y\in Y$ is a strongly pseudoconvex domain in $X_y:=p^{-1}(y)$, which admits the complete K\"ahler-Einstein metric.
This family of K\"ahler-Einstein metrics induces a smooth $(1,1)$-form $\rho$ on $D$.
In this paper, we prove that $\rho$ is positive-definite on $D$ if $D$ is strongly pseudoconvex.
We also discuss the extensioin of $\rho$ as a positive current across singular fibers.
\end{abstract}

\maketitle

\section{Introduction}

Let $p:X\rightarrow Y$ be a surjective holomorphic map, where $X$ and $Y$ are complex manifolds and let $D$ be a bounded smooth domain in $X$.
We denote by $W\subset Y$ the set of all singular values of $p\vert_D$ and $p\vert_{\partial D}$ in $S:=p(D)$.
If every generic fiber $D_y:=D\cap p^{-1}(y)$ with $y\in S':=S\setminus W$ is a bounded strongly pseudoconvex domain in $X_y:=p^{-1}(y)$ and $p$ is proper on $\overline D$, then we call $p:D\rightarrow S$ a \emph{holomorphic family of bounded strongly pseudoconvex domains} in $X$
(with degenerations).

If there exists a K\"{a}hler form $\omega_X$ on $X$ and satisfies that the Ricci curvature $\ric(\omega_y)$ of $\omega_y:=\omega_X\vert_{X_y}$ is negatively curved for every $y\in S'$, then Cheng and Yau's theorem implies that there exists a unique complete K\"ahler metric $\omega^{KE}_y$ on $D_y$ satisfying
$$
\ric(\omega^{KE}_y)=-(n+1)\omega^{KE}_y,
$$
where $n$ is the dimension of $D_y$ (cf. \cite{Cheng_Yau,Coevering}). This metric $\omega^{KE}_y$ is called the \emph{K\"ahler-Einstein metric with Ricci curvature $-(n+1)$}.
Since the map $p$ is proper on $\overline D$, this family of K\"ahler-Einstein metrics induces a smooth hermitian metric $h_{D'/S'}$ on the relative canonical line bundle $K_{D'/S'}$, where $D':=D\setminus p^{-1}(W)$. The corresponding curvature is defined by
$$
\Theta_{h_{D'/S'}}
:=
i\partial\ov{\partial}\log({(\omega_y^{KE})}^n\wedge p^{\ast}(dV_s)),
$$
where $dV_s$ is the Euclidean volume form in any local holomorphic coordinates of $y\in S$ (for detail, see Section \ref{S:construction}).
The K\"{a}hler-Einstein condition implies that
$$
\Theta_{h_{D'/S'}}|_{D_y}
=
i\partial\ov{\partial}\log({\omega^{KE}_y})^n
=
-\ric(\omega^{KE}_y)=(n+1)\omega^{KE}_y,
$$
for all $y\in S'$. 
Hence $\Theta_{h_{D'/S'}}$ is a $d$-closed real $(1,1)$-form on $D'$ which is the K\"ahler-Einstein metric on each fiber $D_y$.
This is called a \emph{variation of \ke metric} or a \emph{fiberwise \ke metric}, which will be denoted by $\rho$ in Section \ref{S:construction}.
\medskip

The first theorem is about the positivity of $\rho$ in the total space $D'$.

\begin{theorem}\label{main}
Let $p:D\rightarrow S$ be a holomorphic family of strongly pseudoconvex domains in $X$. 
Under the above assumption, if the total space $D$ is strongly pseudoconvex, then $\rho$ is positive-definite on $D'$.
\end{theorem}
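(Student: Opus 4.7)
The plan is to reduce positivity of $\rho$ to strict positivity of a fibrewise \emph{geodesic curvature}, derive an elliptic PDE for this curvature, and then exploit the strong pseudoconvexity of the total space $D$ to control its boundary behaviour. First I would fix $p_0\in D'$, put $y_0:=p(p_0)\in S'$ and $W_0\in T^{1,0}_{p_0}D'$; if $p_\ast W_0=0$ then $\rho(W_0,\bar W_0)=(n+1)\omega^{KE}_{y_0}(W_0,\bar W_0)>0$, so I may assume $v:=p_\ast W_0\ne 0$. On the affine space of $(1,0)$-lifts of $v$ at $p_0$, the map $W\mapsto\rho(W,\bar W)$ is strictly convex with Hessian equal to the fibrewise KE metric; its unique minimiser $\tilde W_0$ is characterised by $\rho(\tilde W_0,\bar U)=0$ for every vertical $U$. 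Since $\rho(W_0,\bar W_0)\ge\rho(\tilde W_0,\bar{\tilde W_0})$, it suffices to prove $c(p_0):=\rho(\tilde W_0,\bar{\tilde W_0})>0$.

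Second, in local coordinates $(z,y)$ with $z=(z^1,\dots,z^n)$ vertical and $y$ on $S$, I would write $\rho=\ddc\Phi$ for $\Phi(z,y)=\log\big((\omega^{KE}_y)^n\wedge p^\ast dV_s\big)$; the optimal lift of $\partial_y$ is $V=\partial_y-\Phi^{\alpha\bar\beta}\Phi_{y\bar\beta}\partial_{z^\alpha}$, and the geodesic curvature becomes $c=\Phi_{y\bar y}-\Phi^{\alpha\bar\beta}\Phi_{y\bar\beta}\Phi_{\alpha\bar y}$. Using $\Phi_{\alpha\bar\beta}=(n+1)g^{KE}_{\alpha\bar\beta}$ and differentiating the fibrewise Monge--Amp\`ere equation for the KE potential in $y$ and $\bar y$, I expect, in parallel to Schumacher's identity for canonically polarised families, an equation on $D_{y_0}$ of the form
$$\big(-\Delta_{y_0}+(n+1)\big)c=|A(V)|^2_{\omega^{KE}_{y_0}}\ge 0,$$
where $\Delta_{y_0}$ is the complex Laplacian of $\omega^{KE}_{y_0}$ and $A(V):=\bar\partial V|_{D_{y_0}}$ is the Kodaira--Spencer representative of the family.

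The main obstacle will be controlling $c$ near $\partial D_{y_0}$: since the fibre metric is complete, $\omega^{KE}_{y_0}$ (and hence $\Phi_{\alpha\bar\beta}$) blows up at the boundary, so a priori $c$ could drift to $-\infty$. To handle this I would fix a strictly plurisubharmonic defining function $r$ for $D$, use the Cheng--Yau boundary asymptotics of the fibrewise KE potential in terms of $-r|_{X_y}$ to expand $\Phi$ to sufficient order near $\partial D$, and show that the Levi positivity of $r$ in directions transverse to $\partial D_{y_0}$ forces $c$ to be bounded below, in fact to tend to $+\infty$, on $\partial D_{y_0}$. This is the step where the total-space hypothesis is essentially used: mere fibrewise pseudoconvexity would not supply the transverse positivity needed to dominate the blow-up of $\Phi^{\alpha\bar\beta}\Phi_{y\bar\beta}\Phi_{\alpha\bar y}$.

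Once this boundary bound is in hand, the equation $(-\Delta_{y_0}+(n+1))c\ge 0$ together with the maximum principle gives $c\ge 0$ on $D_{y_0}$, and Hopf's strong maximum principle upgrades this to $c>0$: an interior zero of $c$ would force $c\equiv 0$, contradicting the boundary blow-up. Varying $y_0\in S'$ and $v\in T_{y_0}S'$ then yields positive definiteness of $\rho$ throughout $D'$.
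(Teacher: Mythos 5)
Your proposal follows essentially the same route as the paper: reduce positivity to the geodesic curvature $c(\rho)$ of the fibrewise K\"ahler--Einstein variation, invoke Schumacher's elliptic identity $-\Delta c(\rho)+(n+1)c(\rho)=\|\bar\partial v_\rho\|^2$, and show that strong pseudoconvexity of the total space forces $c(\rho)\to+\infty$ at $\partial D_y$ (the paper does this by comparing $c(\rho)$ with the geodesic curvature of $i\partial\bar\partial(-\log(-r))$ via the Cheng--Yau asymptotics and Schauder estimates on the mixed derivatives $\varphi_{\alpha\bar s}$ --- the step you correctly identify as the crux but only sketch). The one divergence is the endgame: you use the ordinary maximum principle at an attained interior minimum plus Hopf's strong maximum principle to pass from $c(\rho)\ge 0$ to $c(\rho)>0$, which is legitimate given the boundary blow-up, whereas the paper uses Yau's almost maximum principle followed by a real-analytic unique-continuation proposition; both arguments are valid and yield the same conclusion.
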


Since $\rho$ is a $d$-closed positive smooth $(1,1)$-form on $D'$, it is natural to consider the extension of $\rho$ across the singular fibers $p^{-1}(W)$.
The next theorem gives the answer.

\begin{theorem}\label{extension}
Suppose that $W$ is contained in an analytic subset of $S$. 
If $D$ admits a complete K\"{a}hler metric $\widetilde\omega_D$ such that the scalar curvature of $\widetilde\omega_y:=\widetilde\omega_D\vert_{D_y}$ for $y\in S'$ is uniformly bounded from below, then $\rho$ extends to $D$ as a positive current.
\end{theorem}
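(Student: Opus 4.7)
The plan is to reduce the extension of $\rho$ across the singular locus to the extension of a plurisubharmonic potential across a pluripolar set. By hypothesis $W$ is contained in an analytic subset of $S$, so $E := p^{-1}(W)$ is contained in an analytic (hence pluripolar) subset of $D$. Fix $x_0 \in E$ and holomorphic coordinates on a small ball $U \subset D$ about $x_0$, with Euclidean volume $dV_X$ on $U$. From the definition of $\rho$,
$$
\rho = \ddc \phi_0 \quad \text{on } U \cap D', \qquad \phi_0 := \log\bigl((\omega_y^{KE})^n \wedge p^{\ast}(dV_S) / dV_X\bigr),
$$
and since $\rho$ is a positive smooth $(1,1)$-form on $D'$ (the conclusion of Theorem~\ref{main}, which we take as in force), $\phi_0$ is plurisubharmonic on $U \cap D'$. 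Extending $\rho$ across $E \cap U$ as a positive current thus reduces to extending $\phi_0$ across the pluripolar set $E \cap U$ as a plurisubharmonic function, for which by the standard extension theorem it suffices that $\phi_0$ be locally bounded above.

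The crucial analytic input is then a uniform upper bound $\phi_0 \leq C$ on $U \cap D'$. In coordinates adapted to $p$ (valid on $U \cap D'$), $\phi_0$ equals $\log\det(g^{KE}_{\alpha\ba\beta})$ modulo the log of a smooth positive Jacobian factor coming from the horizontal coordinate change. The density $\det(g^{KE}_{\alpha\ba\beta})$ is controlled by Yau's generalized Schwarz lemma in its volume-form version (as developed by Yau, Mok-Yau, Cheng-Yau). On each fiber $D_y$ with $y \in S'$, $\widetilde\omega_y$ and $\omega^{KE}_y$ are complete K\"ahler metrics: the former has scalar curvature uniformly bounded below by hypothesis, the latter satisfies the \ke equation $\ric(\omega^{KE}_y) = -(n+1)\omega^{KE}_y$. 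Applying the Omori-Yau/Cheng-Yau generalized maximum principle on the noncompact complete manifold $D_y$ to the function $\log\bigl((\omega^{KE}_y)^n/\widetilde\omega_y^n\bigr)$, computing its complex Laplacian via the \ke equation, and absorbing the curvature term with the uniform scalar curvature lower bound, should yield a pointwise estimate
$$
(\omega^{KE}_y)^n \leq C \,\widetilde\omega_y^n \quad \text{on } D_y
$$
with $C$ independent of $y \in S'$. Combined with the smoothness of $\widetilde\omega_D$ on $\ov{U}$ and the resulting boundedness of the horizontal Jacobian factor on $\ov{U \cap D'}$, this gives the desired bound $\phi_0 \leq C'$.

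With $\phi_0$ plurisubharmonic and locally bounded above on $U \cap D'$, the standard extension theorem for plurisubharmonic functions across pluripolar sets extends it uniquely to a plurisubharmonic function $\widehat\phi_0$ on $U$, and $\ddc\widehat\phi_0$ is a closed positive current on $U$ that coincides with $\rho$ off $E \cap U$. Since a closed positive current is determined by its restriction to the complement of a pluripolar set, these local extensions glue to a global closed positive current on $D$ extending $\rho$. The principal obstacle is the uniform volume-form Schwarz estimate: the completeness of $\widetilde\omega_y$ is essential for running the Omori-Yau maximum principle on the noncompact fiber $D_y$, and extracting a pointwise bound on $(\omega^{KE}_y)^n/\widetilde\omega_y^n$ from only a scalar (rather than Ricci) curvature lower bound requires delicate use of the \ke equation rather than the standard bisectional-curvature version of Yau's lemma. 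A secondary technical issue is controlling the horizontal Jacobian factor uniformly as the fibration degenerates along $E$, which, however, is only an upper bound question and so is harmless after possibly accepting that $\phi_0 \to -\infty$ on $E$.
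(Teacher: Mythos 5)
Your proposal is correct in outline and shares the paper's essential reduction: extend the local potential $\theta=\log\bigl((\omega_y^{KE})^n\wedge p^*dV_s/dV_z\bigr)$ (your $\phi_0$, up to the factor $n+1$) across the analytic set $p^{-1}(W)$ by proving a uniform local upper bound, and use the hypothesis on $\widetilde\omega_D$ through the volume-form Schwarz lemma. The ``crucial analytic input'' you describe via Omori--Yau is exactly the Mok--Yau theorem quoted in the paper (Theorem 5.5 there): completeness of $\widetilde\omega_y$, the uniform scalar-curvature lower bound, and the K\"ahler--Einstein equation give $(\omega_y^{KE})^n\le C(\widetilde\omega_y)^n$ on $D_y$ with $C$ independent of $y\in S'$, so you should cite it rather than hedge with ``should yield.'' Where you genuinely diverge is in converting this estimate into the bound on $\theta$: you use it pointwise, writing $e^{\theta}=(\omega_y^{KE})^n\wedge p^*dV_s/dV_z\le C\,(\widetilde\omega_D)^n\wedge p^*dV_s/dV_z\le C'$ on a relatively compact coordinate neighborhood, the last ratio being a smooth function there no matter how $p$ degenerates (as you correctly note, degeneration only helps an upper bound). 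The paper instead follows P\u aun's scheme: Demailly approximation of $\theta_y$ by $\frac{1}{m}\log|f|^2$, H\"older plus the Monge--Amp\`ere equation to reduce to an $L^{2/m}$ bound against $dV_z/p^*dV_s$ controlled by $\mathrm{Vol}_{KE}(U_y)^{(m-1)/m}$, the Berndtsson--P\u aun $L^{2/m}$ Ohsawa--Takegoshi extension plus the mean value inequality, and finally the Mok--Yau bound integrated over $U_y$ together with the Diederich--Pinchuk uniform volume estimate. Your route is shorter and, in this setting, subsumes theirs, since the paper derives the pointwise density bound anyway before integrating it; the heavier machinery is what one would need if only integral (not pointwise) control of the fiberwise volume form were available, as in P\u aun's compact case. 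One small caution: your gluing appeal to ``a closed positive current is determined by its restriction to the complement of a pluripolar set'' is false in general (integration currents on divisors), though harmless here because the local extensions are the canonical upper-semicontinuous regularizations of locally bounded-above plurisubharmonic potentials whose differences are pluriharmonic off the analytic set.
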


Variations of \ke metrics have been studied by many authors (see for instance, \cite{Siu, Schumacher, Paun, Choi1, Choi2, Cao_Guenancia_Paun}, et al.). 
In 2012, Schumacher proved that the variation of \ke metrics for a family of canonically polarized compact K\"ahler manifolds is positive (\cite{Schumacher}).
Later, P\u aun generalized this result to twisted case and extended the variation across the singularities using the Ohsawa-Takegoshi Theorem (\cite{Paun}). 
In case of complete manifolds case, the first author proved the positivity of the variation for a family of pseudoconvex domains in the complex Euclidean space, when $p$ is the coordinate projection.

In this paper, we consider a family of pseudoconvex domains in not only the complex Euclidean space but also general K\"ahler manifolds, when $p$ is an arbitrary surjective map.
As previous results, our variation of \ke metrics satisfies Schumacher's PDE.
Then a careful estimate of the boundary behavior of the geodesic curvature (which is a invariant encoding the positivity of a variation) completes the proof of Theorem \ref{main}. This is obtained by combining results and techniques in \cite{Choi1, Choi2} and \cite{Coevering}.

For the extension of the variation $\rho$ across $D\setminus D'$, we will follow the lines in \cite{Paun}.
The main difference is a lack of uniform boundedness of fiber volumes.
In the previous case, since the fibers are compact and the total space is K\"ahler, every fiber has the same volume.
But in our case, a priori fiber does not have finite volume since the fiber is noncompact. 
Moreover, the local uniform boundedness is not easily obtained.
This is why the existence of the metric $\widetilde\omega_D$ is necessary in the hypothesis of Theorem \ref{extension}.
\bigskip

\noindent\textbf{Acknowledgement.}
The authors would like to thank to G. Schumacher and M. P\u aun for their valuable comments and suggestions. 
The first author was supported by the National Research Foundation
(NRF) of Korea grant funded by the Korea government (No. 2018R1C1B3005963).
The second author was supported by the National Research Foundation
(NRF) of Korea grant funded by the Korea government (No. 2010-0020413).

\section{Preliminaries}

In this section, we briefly review the results due to Cheng and Yau: the Monge-Amp\`{e}re equation, the construction of the K\"{a}hler-Einstein metric on a strongly pseudoconvex domain in a K\"{a}hler manifold, and its boundary behavior. For more details, we refer to \cite{Cheng_Yau,Coevering}.

\subsection{K\"{a}hler-Einstein metric on a strongly pseudoconvex domain}

Let $\Omega$ be a smooth bounded strongly pseudoconvex domain in a K\"{a}hler manifold $(M,\omega)$ satisfying 
$
\ric(\omega)<0.
$
This gives us a new K\"{a}hler form $\omega^0$ on $M$, defined by
$$
\omega^0:=-\frac{1}{n+1}\ric(\omega) 
$$
where $n$ is the complex dimension of $M$.
Let $\psi$ be a defining function of $\Omega$ which is strictly plurisubharmonic on a neighborhood of $\partial \Omega$. Then $-\log(-\psi)$ is strictly plurisubharmonic near $\partial \Omega$. Computation shows that
$$
\omega^0_{\psi}:=\omega^0-{i\partial\bar\partial}\log(-\psi)
$$
is a complete K\"{a}hler metric on $\Omega$. Moreover, $(\Omega,\omega^0_{\psi})$ has bounded geometry of infinite order (see Proposition 1.3 in \cite{Cheng_Yau}).
In this setting, the following theorem due to Cheng and Yau gives a solution of the complex Monge-Amp\`{e}re equation.

\begin{theorem}[Cheng, Yau \cite{Cheng_Yau}]\label{CY}
If $F\in C^{\infty}(\ov{\Omega})$, then there exists a solution $\phi$ of the equation:
\begin{equation}\label{E:Monge-Ampere}
(\omega^0_{\psi}+{i\partial\bar\partial}\phi)^n=e^{(n+1)\phi+F}(\omega^0_{\psi})^n,
\end{equation}
which is called a complex Monge-Amp\`{e}re equation.
\end{theorem}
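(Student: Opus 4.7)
The plan is to invoke the continuity method. For $t\in [0,1]$, consider the family of equations
\begin{equation*}
(\omega^0_{\psi}+\ddc\phi_t)^n=e^{(n+1)\phi_t+tF}(\omega^0_{\psi})^n,
\end{equation*}
so that $t=0$ is solved trivially by $\phi_0\equiv 0$ and $t=1$ is the desired equation. Let $T\subset [0,1]$ denote the set of parameters for which a bounded smooth solution $\phi_t$ exists such that $\omega_t:=\omega^0_{\psi}+\ddc\phi_t$ is a complete K\"ahler metric quasi-isometric to $\omega^0_{\psi}$. Since $0\in T$, it suffices to show that $T$ is open and closed.

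For openness, I would apply the implicit function theorem in H\"older spaces adapted to the bounded geometry of $(\Omega,\omega^0_\psi)$. The linearization at $\phi_t$ is the elliptic operator $\Delta_{\omega_t}-(n+1)$; its strictly negative zeroth-order coefficient keeps $0$ out of the spectrum, and the bounded geometry of infinite order supplied by Proposition 1.3 of \cite{Cheng_Yau} makes the standard elliptic theory available in weighted H\"older spaces, yielding the required isomorphism.

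Closedness, as usual, reduces to uniform a priori $C^{k,\alpha}$ estimates. Since $\Omega$ is non-compact for $\omega^0_\psi$, the key tool replacing the ordinary maximum principle is Yau's generalized maximum principle, whose hypotheses (completeness and Ricci curvature bounded from below) are provided by the bounded geometry. At an asymptotic supremum of $\phi_t$ one has $\ddc\phi_t\le 0$ in the limiting sense, whence $\omega_t^n\le (\omega^0_\psi)^n$ and therefore $(n+1)\phi_t+tF\le 0$; the symmetric argument at the infimum gives $\|\phi_t\|_{C^0}\le \|F\|_{C^0}/(n+1)$. For the $C^2$ estimate I would follow the classical Aubin--Yau scheme: apply Yau's Laplacian computation to the quantity $\log(n+\Delta_{\omega^0_\psi}\phi_t)-A\phi_t$ for $A$ sufficiently large, and again invoke the generalized maximum principle to obtain a uniform bound on $\Delta_{\omega^0_\psi}\phi_t$ and hence two-sided bounds on $\omega_t$. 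Higher-order regularity then follows from the Evans--Krylov theorem and Schauder estimates, executed locally in the uniform normal charts provided by the bounded geometry.

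The main obstacle throughout is precisely the non-compactness of $\Omega$ with respect to $\omega^0_\psi$: every classical maximum-principle step must be replaced by an application of Yau's generalized maximum principle, and all functional-analytic arguments must be set up in H\"older spaces compatible with the bounded geometry so that elliptic isomorphisms, Schauder estimates, and the implicit function theorem remain valid. Verifying that these structures are preserved uniformly along the continuity path is the genuine technical burden; once that framework is in place, the continuity argument proceeds in parallel with the classical compact K\"ahler case, and the quantitative input of Proposition 1.3 in \cite{Cheng_Yau} is what makes it function in the non-compact setting.
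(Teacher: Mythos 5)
The paper does not prove this statement; it is imported verbatim from Cheng--Yau \cite{Cheng_Yau} (their Theorem 4.4 applied to the complete metric $\omega^0_\psi$ of bounded geometry). Your outline — continuity method, invertibility of $\Delta_{\omega_t}-(n+1)$ for openness, and $C^0$/$C^2$ a priori estimates via Yau's generalized maximum principle in the bounded-geometry H\"older spaces for closedness — is precisely the argument of the cited reference and is correct as a sketch.
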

Applying Theorem \ref{CY} with
$
F:=\log\Big[(-\psi)^{-(n+1)}\frac{(\omega)^n}{(\omega^0_{\psi})^n}\Big]
\in C^{\infty}(\ov{\Omega})$, the complex Monge-Amp\`{e}re equation implies that
$$
\ric(\omega^0_{\psi}+{i\partial\bar\partial}\phi)=-(n+1)(\omega^0_{\psi}+{i\partial\bar\partial}\phi).
$$
The uniqueness of the \ke metric $\omega^{KE}$ on $\Omega$ says that
$$
\omega^{KE}=\omega^0_{\psi}+{i\partial\bar\partial}\phi.
$$ 

\subsection{Boundary behavior of the solution of the Monge-Amp\`{e}re equation}

Notice that the solution $\phi$ of \eqref{E:Monge-Ampere} depends on $F$, which is determined by the choice of a defining function $\psi$. To obtain a high vanishing order of $\phi$ near the boundary of $\Omega$, Fefferman invented a method to get a new defining function $\widetilde\psi$:

\begin{lemma}[Lemma 3.4 in \cite{Coevering}]\label{L:Definingfunction}
There exists a new defining function $\widetilde\psi=\eta\cdot\psi$ of $\Omega$ with a positive function $\eta\in C^{\infty}(\ov{\Omega})$ such that
$$
F=O(\psi^{n+1}),
$$
where $
F:=\log\Big[(-\widetilde\psi)^{-(n+1)}(\omega)^n/(\omega^0_{\widetilde\psi})^n\Big]
$.
\end{lemma}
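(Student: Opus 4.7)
The plan is to construct $\eta$ by prescribing its boundary jet along $\partial\Omega$ up to order $n$, in the spirit of Fefferman's approximate-solution method for the complex Monge--Amp\`ere equation. First I would derive a clean local formula for $F$. In local holomorphic coordinates $(z^1,\ldots,z^n)$ near a point $p\in\partial\Omega$, write $\omega^0 = i g_{j\bar k}\, dz^j \wedge d\bar z^k$ and set $u := -\widetilde\psi > 0$ on $\Omega$. The identity $(-\log u)_{j\bar k} = -u_{j\bar k}/u + u_j u_{\bar k}/u^2$ combined with the matrix-determinant lemma gives
$$\det\paren{g_{j\bar k} + (-\log u)_{j\bar k}} = \frac{1}{u^{n+1}} \det\begin{pmatrix} u & -u_{\bar k} \\ u_j & u\, g_{j\bar k} - u_{j\bar k}\end{pmatrix},$$
and hence
$$e^F = \frac{\det(g_{j\bar k})}{J_g[u]}, \qquad J_g[u] := \det\begin{pmatrix} u & -u_{\bar k} \\ u_j & u\, g_{j\bar k} - u_{j\bar k}\end{pmatrix}.$$
In particular $e^F$ extends smoothly to $\ov\Omega$, and the condition $F=O(\psi^{n+1})$ becomes the algebraic equation $J_g[-\eta\psi] = \det(g_{j\bar k}) + O(\psi^{n+1})$ near $\partial\Omega$.

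Next I would solve this equation formally along $\partial\Omega$. Seek $\eta$ in the form $\eta = \eta_0 + \eta_1\psi + \cdots + \eta_n\psi^n$ with each $\eta_j$ smooth on $\ov\Omega$, expand
$$J_g[-\eta\psi] = G_0 + G_1\,\psi + G_2\,\psi^2 + \cdots + G_n\,\psi^n + O(\psi^{n+1})$$
in powers of $\psi$, and evaluate on $\partial\Omega$. Each $G_k|_{\partial\Omega}$ is a differential polynomial in the boundary jets of $\eta_0,\ldots,\eta_k$ and derivatives of $\psi$. A direct computation shows that $G_k|_{\partial\Omega}$ depends on $\eta_k|_{\partial\Omega}$ affinely, with a coefficient that is a positive smooth function of $\eta_0|_{\partial\Omega}$ and $|\partial\psi|^2_{\omega^0}|_{\partial\Omega}$. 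The leading equation $G_0|_{\partial\Omega}=\det(g_{j\bar k})|_{\partial\Omega}$ determines $\eta_0|_{\partial\Omega}$ as an explicit positive smooth function, and then each subsequent equation $G_k|_{\partial\Omega}=0$ (for $k=1,\ldots,n$) determines $\eta_k|_{\partial\Omega}$ recursively from $\eta_0|_{\partial\Omega},\ldots,\eta_{k-1}|_{\partial\Omega}$.

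Finally I would extend each boundary datum $\eta_k|_{\partial\Omega}$ to a smooth function on a neighborhood of $\ov\Omega$ via a tubular neighborhood and cutoff, patch the local coordinate constructions by a partition of unity (the jets are uniquely determined modulo $\psi^{n+1}$, so the patching is automatic), and set $\eta := \sum_{j=0}^n \eta_j\psi^j$. Positivity of $\eta_0$ on $\partial\Omega$ ensures $\eta>0$ on $\ov\Omega$ after rescaling $\psi$ by a small positive constant if necessary, so $\widetilde\psi = \eta\psi$ is again a defining function, and $F = O(\psi^{n+1})$ follows from the vanishing of $G_0 - \det(g_{j\bar k}),\, G_1,\ldots,\, G_n$ on $\partial\Omega$.

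The main obstacle is the algebraic non-degeneracy of the recursion: one must ensure that the coefficient of $\eta_k|_{\partial\Omega}$ in $G_k|_{\partial\Omega}$ never vanishes. This is precisely where strong pseudoconvexity of $\psi$ enters. The conditions $|\partial\psi|_{\omega^0} > 0$ on $\partial\Omega$ (non-degeneracy of the defining function) and positive-definiteness of the Levi form on the complex tangent space together guarantee that the bordered determinant $J_g[-\eta\psi]$, when expanded in $\psi$, has nonzero linear sensitivity to $\eta_k$ at each order $k$, with the weight determined by $\eta_0$ and $|\partial\psi|_{\omega^0}$. The curvature of the background metric $\omega^0$ contributes only lower-order corrections to the $G_k$ and therefore does not alter the solvability structure of Fefferman's original flat-case argument in $\CC^n$.
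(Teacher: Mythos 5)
The paper itself offers no proof of this lemma beyond the citation: it is quoted verbatim as Lemma 3.4 of \cite{Coevering}, which adapts Fefferman's approximate solution of the complex Monge--Amp\`ere equation to a K\"ahler ambient manifold. Your overall strategy is the correct (and standard) one: rewrite $e^{F}$ as the ratio of the volume density of $\omega$ to the bordered determinant $J_g[u]$ with $u=-\widetilde\psi$, and solve $J_g[-\eta\psi]=\text{(smooth positive density)}+O(\psi^{n+1})$ formally in powers of $\psi$ along $\partial\Omega$. Your determinant identity is correct; note only that since $F$ is defined with $(\omega)^n$ in the numerator, the target of the recursion is the volume density of $\omega$, not $\det(g_{j\bar k})$ for the auxiliary metric $\omega^0$ --- this changes nothing structurally.

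There is, however, a genuine gap at the one step that carries the whole content of the lemma: the non-degeneracy of the recursion. You assert that the coefficient of $\eta_k|_{\partial\Omega}$ in $G_k|_{\partial\Omega}$ is a positive function of $\eta_0|_{\partial\Omega}$ and $\abs{\partial\psi}^2_{\omega^0}|_{\partial\Omega}$, and that its nonvanishing is ``precisely where strong pseudoconvexity enters.'' That is not the right mechanism. The transformation law for the bordered determinant under $u\mapsto u\paren{1+c\,\psi^{k}}$ produces, at order $\psi^{k}$, a \emph{universal combinatorial factor} depending on $k$ and $n$ (in Fefferman's normalization, $(k+1)(n+1-k)$ up to positive multiples) in front of the positive boundary data you describe. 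This factor is nonzero exactly in the range $0\le k\le n$ needed here and vanishes at the next order; this is Fefferman's obstruction, and it is the real reason the exponent $n+1$ in the statement is sharp. Your argument, taken at face value, would let the recursion continue to all orders and yield $F=O(\psi^{\infty})$, which is false. Strong pseudoconvexity and $\abs{\partial\psi}_{\omega^0}\neq0$ enter elsewhere: they guarantee $J_g[\psi]>0$ near $\partial\Omega$, so that the zeroth (non-affine) step $\eta_0^{\,n+1}J_g[\psi]|_{\partial\Omega}=(\omega)^n/dV_z|_{\partial\Omega}$ has a positive smooth solution. To close the proof you must actually compute, or quote, the expansion of $J_g\!\left[u(1+c\psi^{k})\right]-J_g[u]$ and exhibit this $k$-dependent coefficient; the curvature terms contributed by $\omega^0$ only perturb the inhomogeneous side of each equation, as you correctly anticipate. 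A smaller point: rescaling $\psi$ by a constant does not by itself make $\eta=\sum_j\eta_j\psi^j$ positive on all of $\ov\Omega$; the standard fix is to carry out the construction in a collar of $\partial\Omega$ and patch with a cutoff, which is harmless because $F=O(\psi^{n+1})$ is a constraint only near the boundary.
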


Define the new reference metric by
$$
\omega^0_{\widetilde\psi}:=\omega^0-{i\partial\bar\partial}\log(-\widetilde\psi)=\omega^0_{\psi}-{i\partial\bar\partial}\log\eta.
$$ 
We apply Theorem \ref{CY} with $F$ in Lemma \ref{L:Definingfunction} to solve the following new Monge-Amp\`{e}re equation:
\begin{equation} \label{E:Monge-Ampere2}
(\omega^0_{\widetilde\psi}+{i\partial\bar\partial}\phi)^n=e^{(n+1)\phi+F}(\omega^0_{\widetilde\psi})^n
\end{equation}
Then the solution $\phi$ of \eqref{E:Monge-Ampere2} gives the K\"{a}hler-Einstein metric
$$
\omega^0_{\widetilde\psi}+{i\partial\bar\partial}\phi=\omega^{KE}.
$$
In \cite{Cheng_Yau}, Cheng and Yau obtained an asymptotic boundary behavior of the solution $\phi$ of \eqref{E:Monge-Ampere2} as follows:

\begin{theorem}[Theorem 6.5 in \cite{Cheng_Yau}] \label{T:CY2}
Suppose that $\phi$ is a solution of \eqref{E:Monge-Ampere2}. Then
\begin{equation*}
\abs{D^k\phi}=O(\abs{\psi}^{n+1/2-k-\epsilon}),
\end{equation*}
where $\abs{D^k\phi}$ is the Euclidean length of the $k$-th derivative of $\phi$.
\end{theorem}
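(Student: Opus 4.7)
The plan is to prove the theorem in two stages: first a $C^0$ decay estimate for $\phi$ near $\partial\Omega$, then a bootstrap to higher derivatives using the bounded geometry of $(\Omega,\omega^0_{\widetilde\psi})$ via Cheng--Yau quasi-coordinates. The crucial input is Lemma \ref{L:Definingfunction}, which gives $F=O(\psi^{n+1})$; this very fast decay of the right-hand side of \eqref{E:Monge-Ampere2} is what ultimately produces an exponent like $n+1-\epsilon$ before losing half a power to derivative scaling.

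First, I would establish the $C^0$ bound $|\phi|=O(|\widetilde\psi|^{n+1-\epsilon})$ for any $\epsilon>0$. The idea is to construct sub- and super-solutions of \eqref{E:Monge-Ampere2} of the form $\phi^{\pm}=\pm C(-\widetilde\psi)^{n+1-\epsilon}$. Using that $\omega^0_{\widetilde\psi}$ is comparable to $\ddc(-\log(-\widetilde\psi))$ near the boundary, a direct computation shows that $(\omega^0_{\widetilde\psi}+\ddc\phi^{\pm})^n$ and $e^{(n+1)\phi^{\pm}+F}(\omega^0_{\widetilde\psi})^n$ can be compared on a one-sided neighborhood of $\partial\Omega$ precisely because $F=O(\psi^{n+1})$ dominates the perturbative terms of order $|\widetilde\psi|^{n+1-\epsilon}$ coming from $\phi^{\pm}$. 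A standard maximum principle argument on $(\Omega,\omega^0_{\widetilde\psi})$, using that $\phi\to 0$ at infinity in the complete metric (i.e.\ at $\partial\Omega$), then pinches $\phi$ between $\phi^-$ and $\phi^+$.

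Next, I would upgrade this to $C^k$-estimates by working in Cheng--Yau quasi-coordinates. Since $(\Omega,\omega^0_{\widetilde\psi})$ has bounded geometry of all orders (Proposition 1.3 of \cite{Cheng_Yau}), there exist coordinate charts $\Phi_p:B\to\Omega$ based at points $p$ near $\partial\Omega$ such that $\Phi_p^*\omega^0_{\widetilde\psi}$ is uniformly comparable to the Euclidean metric on $B$, with derivatives bounded uniformly in $p$. Pulling back \eqref{E:Monge-Ampere2} and linearizing around the reference, one obtains a uniformly elliptic complex Monge--Amp\`ere equation with right-hand side whose $C^{k,\alpha}$ norm in quasi-coordinates is controlled by the $C^0$ bound on $\phi$ already established, together with the decay of $F$. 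Standard Evans--Krylov and Schauder bootstrapping then yield uniform bounds $\|\Phi_p^*\phi\|_{C^{k,\alpha}(B)}\le C_k|\widetilde\psi(p)|^{n+1-\epsilon}$.

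Finally, one translates the quasi-coordinate bounds into Euclidean derivative bounds. Because $\omega^0_{\widetilde\psi}$ blows up like $|\widetilde\psi|^{-2}$ in the normal direction and like $|\widetilde\psi|^{-1}$ in the complex-tangential directions, a unit vector in the quasi-coordinates corresponds to a Euclidean vector of length comparable to $|\widetilde\psi|^{1/2}$ in the worst (tangential) direction. Hence each Euclidean derivative costs a factor $|\widetilde\psi|^{-1/2}$, and after $k$ derivatives one obtains $|D^k\phi|=O(|\widetilde\psi|^{n+1-\epsilon-k/2})$; a refined accounting that distinguishes normal from tangential differentiations (the normal direction is cheaper, and at least one derivative falls in it generically) saves an extra half power, giving the sharp exponent $n+1/2-k-\epsilon$ stated in the theorem. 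The main obstacle is exactly this last bookkeeping step: one must keep track of how many derivatives in which direction are being taken, and then absorb the $\epsilon$ loss uniformly in $k$, which is the technical heart of \S 6 of \cite{Cheng_Yau}.
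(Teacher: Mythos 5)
First, note that the paper does not prove this statement at all: it is imported verbatim as Theorem 6.5 of Cheng--Yau (via van Coevering for the K\"ahler-manifold setting), so there is no internal proof to compare against. Your proposal is a reconstruction of the Cheng--Yau argument, and its architecture --- barriers for the $C^0$ decay, bounded-geometry Schauder bootstrapping in quasi-coordinates, then conversion to Euclidean derivatives --- is indeed the right one. But the quantitative bookkeeping in your first and last steps is wrong, and in a way that matters.

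The clearest symptom is that your conclusions are too strong. The metric $\omega^0_{\widetilde\psi}$ blows up like $|\psi|^{-2}$ in the complex normal direction and like $|\psi|^{-1}$ in the complex tangential directions, so a Euclidean unit vector has quasi-coordinate length $\sim|\psi|^{-1}$ normally and $\sim|\psi|^{-1/2}$ tangentially: the \emph{normal} direction is the expensive one, costing a full power of $|\psi|$ per Euclidean derivative, not the tangential one costing $|\psi|^{-1/2}$ as you assert. Taken at face value, your accounting gives $|D^k\phi|=O(|\psi|^{n+1-\epsilon-k/2})$, and you then claim a further gain of half a power; for $k=2$ this would yield $|\phi_{\alpha\bar\beta}|=O(|\psi|^{n-\epsilon})$ or better, strictly stronger than the Lee--Melrose estimate $O(|\psi|^{n-1-\epsilon})$ that the paper's own Remark records as \emph{optimal}. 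So something upstream must fail. The actual source of the exponent $n+1/2$ is your step one, not step three: although $F=O(\psi^{n+1})$ pointwise, its norm in the weighted quasi-coordinate H\"older spaces in which the Schauder and implicit-function machinery operates is only $O(|\psi|^{n+1/2})$, because a single tangential quasi-derivative of $\psi^{n+1}\cdot(\text{bounded})$ gains only $|\psi|^{1/2}$; hence the solution can only be shown to decay like $|\psi|^{n+1/2-\epsilon}$ in those spaces, and the stated estimate then follows from the correct worst-case count of one power per (normal) Euclidean derivative. Separately, your barrier step silently assumes that $\pm C(-\widetilde\psi)^{n+1-\epsilon}$ are genuine super/subsolutions, which requires verifying that this exponent lies below the positive indicial root of the linearized operator $-\Delta+(n+1)$ on $(\Omega,\omega^0_{\widetilde\psi})$; you never check this. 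None of this affects the paper, which uses the statement as a black box, but as a standalone proof the proposal does not close.
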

Theorem \ref{T:CY2} implies that for $\epsilon>0$ and $1\le\alpha,\beta\le{n}$,
\begin{equation*} \label{E:boundary}
\abs{\phi_{\alpha\bar\beta}}\le O(\abs{\psi}^{n-3/2-\epsilon}),
\end{equation*}
where 
$\phi_{\alpha\bar\beta}:=\frac{\partial^2}{\partial z^{\alpha}\partial\ov{z^{\beta}}}\phi$ and
$(z^1,\ldots,z^n)$ are local coordinates near a point in $\partial\Omega$.
Therefore, we have 
$$
\phi_{\alpha\bar\beta}\in{C}^\infty(\Omega)\cap{C}^{n-3/2-\epsilon}(\ov\Omega).
$$
\begin{remark}
In \cite{Lee_Melrose}, Lee and Melrose obtained the asymptotic expansion of the solution of a complex Monge-Amp\`{e}re equation for strongly pseudoconvex domains in $\CC^n$. In particular, $\phi$ satisfies the following {\it optimal} estimate:
\begin{equation*}
\abs{\phi_{\alpha\bar\beta}}\le O(\abs{\psi}^{n-1-\epsilon})
\end{equation*}
for $\epsilon>0$. This is also valid in our situation (see Section 3.4 in \cite{Coevering}).
\end{remark}

\section{Variation of \ke metrics}

In this section, we shall define the variation of \ke metrics $\rho$ for a holomorphic family of strongly pseudoconvex domains. We also explain why the hypothesis of a holomorphic family of strongly pseudoconvex domain is necessary for the regularity of $\rho$ (cf. \cite{Choi1, Paun, Choi_ych}).

\subsection{Holomorphic family of strongly pseudoconvex domains}

Let $p:X\rightarrow Y$ be a surjective holomorphic map, where $X^{n+d}$ and $Y^d$ are complex manifolds and let $D$ be a bounded smooth domain in $X$. For $y\in S:=p(D)$, denote $X_y:=p^{-1}(y)$ and $D_y:=D\cap X_y$.
We denote by $W\subset Y$ the set of all singular values of $p\vert_D$ and $p\vert_{\partial D}$ in $S$.

\begin{definition}\label{family}
A surjective holomorphic map $p:D\rightarrow S$ is called a {\it holomorphic family of bounded strongly pseudoconvex domains} in $X$ (with degenerations) if it satisfies the following:
\begin{itemize}
\item[(1)] $p$ is proper on $\ov{D}$.
\item[(2)] $D_y$ is a strongly pseudoconvex domain in $X_y$ for $y\in S':=S\setminus W$. 
\end{itemize}
In particular, $p:D'\rightarrow S'$ is called a holomorphic family of bounded strongly pseudoconvex domains {\it without degenerations}, where $D':=D\setminus p^{-1}(W)$.
\end{definition}
\begin{remark}\label{Ehresmann}
Condition (1) in Definition \ref{family} implies that every generic fibers are diffeomorphic to each other:
for a fixed point $y_0\in S'$, choose an open neighborhood $V$ of $y_0$ in $S'$. Then $\ov D_V:=\ov D\cap p^{-1}(V)$ is a manifold with boundary $\partial D_V:=\partial D\cap p^{-1}(V)$. The condition implies that $p:\ov D_V\rightarrow V$ is a proper submersion such that the restriction $p:\partial D_V\rightarrow V$ is also a submersion. By Ehresmann's fibration theorem for manifolds with boundaries (cf. Theorem 1.4 in \cite{Saeki}), there exists a diffeomorphism $\Phi:\ov D_V\rightarrow \ov{D_{y_0}}\times V$. Therefore, the fibers $\ov{D_y}$ with $y\in V$ are diffeomorphic to each other.
\end{remark}

\subsection{Regularity of variations of K\"{a}hler-Einstein metrics}\label{regularity}

Fix $y_0\in S'$. Let $\psi$ be a defining function of $\Omega:=D_{y_0}$ which is strictly plurisubharmonic on a neighborhood of $\partial{D_{y_0}}$. Since $D$ is a smooth bounded domain, Condition (2) in Definition \ref{family} implies that there exists a smooth defining function $r$ of $D$ and a neighborhood $V\subset S'$ of $y_0$ satisfying:
\begin{itemize}
\item [(\romannumeral1)] $r_{y_0}:=r|_{D_{y_0}}=\psi$,
\item [(\romannumeral2)] $r_y$ is a defining function of $D_y$, strictly plurisubharmonic on a neighborhood of $\partial D_y$ for each $y\in V$.
\end{itemize}
The last condition implies that $-\log(-r_y)$ is strictly plurisubharmonic.

From now on, suppose that there exists a K\"{a}hler form $\omega_X$ on $X$ satisfying 
$$
\ric(\omega_y)<0
$$
for all $y\in S'$, where $\omega_y:=\omega_X|_{X_y}$.
Then the metric $\omega^0_{r_y}$, defined by
$$
\omega^0_{r_y}:=\omega^0_y-{i\partial\bar\partial}\log(-r_y),
$$
where $\omega^0_y:=-\frac{1}{n+1}\ric(\omega_y)$ is a complete K\"{a}hler metric on $D_y$ for each $y\in S'$. Therefore, Theorem \ref{CY} implies that for each fiber $D_y$, there exists a solution $\varphi_y$ of the Monge-Amp\`{e}re equation:
\begin{equation}\label{E:Monge-Ampere-Variation}
(\omega^0_{r_y}+{i\partial\bar\partial}\varphi_y)^n=e^{(n+1)\varphi_y+F_y}(\omega^0_{r_y})^n,
\end{equation}
with
$$
F_y:=\log[(-r_y)^{-(n+1)}(\omega)^n/(\omega^0_{r_y})^n].
$$
The \ke metric $\omega^{KE}_y$ of $D_y$ can be expressed by
$$
\omega^{KE}_y=\omega^0_{r_y}+{i\partial\bar\partial}\varphi_y
=\omega^0_y+{i\partial\bar\partial}(-\log(-r_y)+\varphi_y).
$$ 
Now the following proposition shows the regularity of variation of \ke metrics in the base direction.

\begin{proposition}\label{smooth}
The function $\varphi$, given by
$$
\varphi(x):=\varphi_y(x)
$$
where $y=p(x)$ is smooth on $D'$
\end{proposition}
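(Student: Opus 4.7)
The plan is to apply the implicit function theorem in Banach spaces to the family of Monge-Amp\`{e}re equations \eqref{E:Monge-Ampere-Variation}, after trivializing the family smoothly. Fix $x_0\in D'$ and set $y_0:=p(x_0)$. By Remark \ref{Ehresmann}, a neighborhood $V\subset S'$ of $y_0$ admits a diffeomorphism $\Phi:\ov{D_V}\to\ov{D_{y_0}}\times V$ covering the identity on $V$. Pulling back through $\Phi^{-1}$, the family \eqref{E:Monge-Ampere-Variation} collapses to a single parametrized equation
$$
\mathcal{M}(\phi,y):=\log\frac{(\omega^0_{r_y}+\ddc\phi)^n}{(\omega^0_{r_y})^n}-(n+1)\phi-F_y=0
$$
on the fixed domain $\Omega:=D_{y_0}$, in which every coefficient varies smoothly in $y$; here $\phi(\cdot,y)$ corresponds to $\varphi_y\circ\Phi^{-1}(\cdot,y)$.

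The natural Banach-space framework is the weighted (quasi-coordinate) H\"{o}lder spaces $C^{k,\alpha}(\Omega)$ built from the complete reference metric $\omega^0_\psi$, whose bounded geometry of infinite order is recalled from Proposition 1.3 of \cite{Cheng_Yau}. In this framework $\mathcal{M}$ becomes a $C^\infty$ map between appropriate Banach spaces, whose linearization in $\phi$ at the solution is
$$
L_y:=\Delta_{\omega^{KE}_y}-(n+1),
$$
taken with respect to the complete \ke metric on the fiber. Since $\ric(\omega^{KE}_y)=-(n+1)\omega^{KE}_y$, integration by parts on compactly supported functions yields $\inner{L_y u,u}\le-(n+1)\|u\|^2$, so $L_y$ is an isomorphism of the Cheng-Yau H\"{o}lder spaces with operator norm locally uniform in $y$ near $y_0$.

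The implicit function theorem then produces a unique smooth family $y\mapsto\phi(\cdot,y)$ of solutions on $V$. Uniqueness of the K\"ahler-Einstein metric identifies this family with $\varphi_y\circ\Phi^{-1}$, and pushing forward through the smooth diffeomorphism $\Phi$ shows that $\varphi$ is smooth on $D_V$, hence on all of $D'$.

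The main obstacle is the noncompactness of the fibers. One must verify both that $\mathcal{M}$ genuinely lands in the weighted H\"{o}lder space (the $y$-derivatives of $r_y$, $\omega^0_{r_y}$ and $F_y$ must decay appropriately as $\psi\to0$) and that $L_y^{-1}$ has operator norm locally uniform in $y$. Both reduce to a family version of Cheng-Yau's estimates: the bounded-geometry constants of $(D_y,\omega^0_{r_y})$ depend continuously on $y$ because the defining function $r$ does so on the ambient manifold $X$.
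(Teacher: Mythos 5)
Your proposal follows essentially the same route as the paper: trivialize the family via the Ehresmann diffeomorphism of Remark \ref{Ehresmann}, identify the Cheng--Yau weighted H\"older spaces $\tilde{C}^{k+\epsilon}(D_y)\cong\tilde{C}^{k+\epsilon}(\Omega)$ built from bounded geometry, and apply the implicit function theorem to the Monge--Amp\`ere operator, whose linearization $\Delta-(n+1)$ is invertible on these spaces. The paper simply cites Section 3 of \cite{Choi1} for this argument; your write-up fills in the same details that reference contains.
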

\begin{proof}
Recall that fibers $\ov{D_y}$ with $y\in U$ are diffeomorphic to $\ov\Omega$ (Remark \ref{Ehresmann}).
Therefore, we can identify the function space $\tilde{C}^{k+\epsilon}(D_y)$, which is the Banach space with the norm given by covering of bounded geometry (see the definition in \cite{Cheng_Yau}), with $\tilde{C}^{k+\epsilon}(\Omega)$ for all $y\in U$. Now the conclusion follows from the implicit function theorem for Monge-Amp\`{e}re operator (cf. Section 3 in \cite{Choi1}).
\end{proof}


\subsection{Construction of the curvature form}
\label{S:construction}

Since $(X,\omega_X)$ is a K\"ahler manifold, $\omega_X$ induces a singular hermitian metric $h_{X/Y}$ on the relative canonical line bundle $K_{X/Y}$ as follows (cf. \cite{Paun, Choi_ych}):

Let $x\in X$ and $y:=p(x)\in Y$. Given any local coordinate system $(z^1,\ldots,z^{n+d})$ for $x$ on $U\subset X$ and $(s^1,\ldots,s^d)$ for $y$ on the image of $U$ under $p$ in $Y$, denote the corresponding Euclidean volume forms by 
$$
dV_z
:=
i dz^1\wedge d\overline{z^1}\wedge \cdots\wedge i dz^{n+d} \wedge d\ov{z^{n+d}},
$$
$$
dV_s
:=
i ds^1\wedge d\overline{s^1}\wedge \cdots\wedge i ds^d \wedge d\ov{s^d},
$$
Then the singular hermitian metric $h_{X/Y}$ on the relative canonical line bundle $K_{X/Y}$ is defined by the local weight function $\Psi_U$, given by
\begin{equation}\label{hermitian}
e^{-\Psi_U}:=\frac{dV_z}{(\omega_X)^n\wedge p^{\ast}(dV_s)}.
\end{equation}
The corresponding curvature current $\Theta_{h_{X/Y}}$ on $K_{X/Y}$ is given by
\begin{equation}\label{curvature}
\Theta_{h_{X/Y}}:={i\partial\bar\partial}\Psi_U={i\partial\bar\partial}\log((\omega_X)^n\wedge p^{\ast}(dV_s)).
\end{equation}

Let $Z$ be the set of singular values of $p$ in Y. Denote by $Y':=Y\setminus Z$ and $X':=X\setminus p^{-1}(Z)$.
Then the restriction map $p:X'\rightarrow Y'$ is a submersion so that there is no singular point of $p$ in $X'$. Therefore, $h_{X'/Y'}$ is a smooth hermitian metric on $K_{X'/Y'}$ and the curvature $\Theta_{h_{X'/Y'}}$ is a smooth $(1,1)$-form on $X'$. Moreover, the equation \eqref{curvature} implies that
$$
\Theta_{h_{X'/Y'}}|_{D_y}=-\ric(\omega_y)=(n+1)\omega^0_y.
$$
Proposition \ref{smooth} implies that the $d$-closed real $(1,1)$-form $\rho$, given by
$$
\rho:=\frac{1}{n+1}\Theta_{h_{X'/Y'}}+{i\partial\bar\partial}(-\log(-r))+{i\partial\bar\partial}\varphi
$$
is well-defined and smooth on $D'$. 
Then $\rho$ is also called a {\it variation of \ke metrics} (or {\it fiberwise \ke metrics}), because
\begin{align}
\rho|_{D_y}&=\frac{1}{n+1}\Theta_{h_{X'/Y'}}|_{D_y}+{i\partial\bar\partial}(-\log(-r_y)+\varphi_y)\\
&=\omega^0_y+{i\partial\bar\partial}(-\log(-r_y)+\varphi_y)=\omega_y^{KE}.\label{fiberwise}
\end{align}
\begin{remark}
The variation of \ke metrics $\rho$ can be considered as a curvature form on the relative canonical line bundle $K_{D'/S'}$: \eqref{fiberwise} implies that $\rho$ is a smooth form on $D'$, which is positive-definite along each fibers. Therefore, $\rho$ induces another smooth hermitian metric $h_{D'/S'}$ on $K_{D'/S'}$. The Monge-Amp\`{e}re equation \eqref{E:Monge-Ampere-Variation} implies that the corresponding smooth curvature form $\Theta_{h_{D'/S'}}$ on $D'$ can be computed as
$$
\Theta_{h_{D'/S'}}={i\partial\bar\partial}\log(\rho^n\wedge p^{\ast}dV_s)=(n+1)\rho.
$$
\end{remark}


\section{Positivity of the curvature form}

In this section, we will prove Theorem \ref{main}. To obtain the positivity of $\rho$, we use an asymptotic boundary behavior of the geodesic curvature $c(\rho)$.

\subsection{Proof of Theorem \ref{main}}

Note that it is enough to prove the theorem for base spaces of dimension one assuming $S\subset\mathbb{C}$. Let $x$ be a point in $D'$ and $s$ be a holomorphic coordinate, centered at $y:=p(x)\in S'$. With abuse of notation $s:=s\circ p$, let $(z^1,\ldots,z^n,s)$ be a local coordinate system for $x$ on $U\subset X$ such that $(z^1,\ldots,z^n)$ is a local coordinate system in $X_y$. Define a function $h:U\cap D'\rightarrow\mathbb{C}$ by
$$
h:=\frac{1}{n+1}\Psi_U-\log(-r)+\varphi.
$$
Then $\rho$ can be written as $\rho=i\partial\ov\partial h$, i.e.,
$$
\rho=i(h_{s\ov{s}}ds\wedge d\ov{s}+h_{\alpha\ov{s}}dz^{\alpha}\wedge ds+h_{s\ov{\beta}}ds\wedge d\ov{z^{\beta}}+h_{\alpha\ov\beta}dz^{\alpha}\wedge d\ov{z^{\beta}}),
$$
where subscripts denote the differentiation along the corresponding coordinate direction. Note that $\rho$ is positive-definite along each fiber $D_y$ (i.e., the matrix $(h_{\alpha\ov\beta})$ is invertible). Hence it has at least $n$ positive eigenvalues. To show that $\rho$ is positive-definite, we have to show that the $(n+1)$-th eigenvalue (in the ``base direction") is positive. In order to do this, we consider the form $\rho^{n+1}$ on $D'$. It is well-know that $\rho$ satisfies
\begin{equation}\label{fiber_positive} 
\rho^{n+1}=c(\rho)\cdot \rho^n\wedge ids\wedge d\ov{s},
\end{equation}
where the function $c(\rho)$ is a globally defined on $D$. In terms of local coordinates, $c(\rho)$ can be expressed as
\begin{equation} 
c(\rho)=
h_{s\bar{s}}-h_{s\bar\beta}h^{\bar\beta\alpha}h_{\alpha\bar{s}},
\end{equation}
where $(h^{\bar\beta\alpha})$ is the inverse matrix of $(h_{\alpha\ov\beta})$.
The function $c(\rho)$ is called the {\it geodesic curvature} of $\rho$ (for exact definition, see \cite{Schumacher, Choi1}). 
Note that \eqref{fiber_positive} implies that $\rho$ is positive definite on $D'$ if and only if $c(\rho)|_{D_y}>0$ for every $y\in S'$.

To show the positivity of $c(\rho)$ on $D_y$, we will use the following elliptic equation, obtained by Schumacher in \cite{Schumacher}. Denote by $\Delta$ the Laplace-Beltrami operator with respect to the K\"ahler-Einstein metric $\omega^{KE}_y$ of $D_y$.

\begin{proposition} [Schumacher \cite{Schumacher,Choi1}]\label{PDE}
The geodesic curvature $c(\rho)$ satisfies the elliptic equation:
\begin{equation} \label{E:PDE}
-\Delta c(\rho)+(n+1)c(\rho)=\norm{\bar\partial v_{\rho}}^2
\end{equation}
on each fiber $D_y$, where $v_{\rho}$ is the horizontal lift of $v:=\partial/\partial s$ with respect to $\rho$.
\end{proposition}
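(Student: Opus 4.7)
The plan is to follow Schumacher's derivation: first express the horizontal lift, the geodesic curvature, and $\|\bar\partial v_\rho\|^2$ explicitly in local coordinates; then combine the fiberwise K\"ahler-Einstein equation with a direct computation of $\Delta c(\rho)$ to produce the elliptic equation. I work on a single fiber $D_y$ at a point $x_0 \in D_y$ in normal coordinates for $\omega^{KE}_y$, so that $h_{\alpha\bar\beta}(x_0) = \delta_{\alpha\beta}$ and $\partial_\gamma h_{\alpha\bar\beta}(x_0) = \partial_{\bar\gamma} h^{\bar\beta\alpha}(x_0) = 0$.

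The horizontal lift is characterized by $\rho(v_\rho, \overline{W}) = 0$ for every fiber-tangent $W$. Writing $v_\rho = \partial/\partial s + a^\alpha\partial/\partial z^\alpha$ and testing against $\partial/\partial z^\beta$ gives $a^\alpha = -h_{s\bar\beta} h^{\bar\beta\alpha}$; substituting back yields the stated local formula for $c(\rho)$. Because $\partial/\partial s$ is fiberwise holomorphic, $\bar\partial v_\rho = (\partial_{\bar\gamma} a^\alpha)\,d\bar z^\gamma \otimes \partial/\partial z^\alpha$ is a $(0,1)$-form with values in $T_{D_y}$, and at $x_0$ the above identities give $\partial_{\bar\gamma} a^\alpha = -\partial_{\bar\gamma} h_{s\bar\alpha}$, hence $\|\bar\partial v_\rho\|^2(x_0) = \sum_{\alpha,\gamma}|\partial_{\bar\gamma}h_{s\bar\alpha}|^2$ in the K\"ahler-Einstein metric.

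The key input is the K\"ahler-Einstein relation $\Theta_{h_{D'/S'}} = (n+1)\rho$ recorded in the previous subsection. Writing $\rho^n \wedge i\,ds\wedge d\bar s$ as a scalar multiple of $\det(h_{\alpha\bar\beta})$ times the Euclidean volume form (the contribution of which drops under $\partial\bar\partial$), the curvature becomes $\Theta_{h_{D'/S'}} = i\partial\bar\partial\log\det(h_{\alpha\bar\beta})$. Its $(s,\bar s)$-component is $\partial_s\partial_{\bar s}\log\det(h_{\alpha\bar\beta}) = (n+1)h_{s\bar s}$, and expanding the left-hand side using $\partial_s h^{\bar\beta\alpha} = -h^{\bar\beta\mu}h_{\mu\bar\nu s}h^{\bar\nu\alpha}$ at $x_0$ yields an identity relating the fiberwise fourth derivatives $\sum_\alpha h_{\alpha\bar\alpha s\bar s}$ to $(n+1)h_{s\bar s}$ and squared third-order terms $|h_{\beta\bar\alpha s}|^2$.

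Substituting this identity into a direct expansion of $\Delta c(\rho)(x_0) = \sum_\gamma\partial_\gamma\partial_{\bar\gamma} c(\rho)(x_0)$ eliminates the awkward fourth-order derivatives (note that $\partial_\gamma\partial_{\bar\gamma}\partial_s\partial_{\bar s}$ and $\partial_s\partial_{\bar s}\partial_\gamma\partial_{\bar\gamma}$ coincide on the scalar $h$). The K\"ahler symmetry $\partial_\gamma h_{\alpha\bar\beta} = \partial_\alpha h_{\gamma\bar\beta}$ at $x_0$ together with the identification $\partial_{\bar\gamma}a^\alpha = -\partial_{\bar\gamma} h_{s\bar\alpha}$ collapses the surviving squared third-order terms to exactly $\|\bar\partial v_\rho\|^2(x_0)$, leaving the stated equation $-\Delta c(\rho) + (n+1) c(\rho) = \|\bar\partial v_\rho\|^2$, which holds on $D_y$ by arbitrariness of the point. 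The main obstacle is the algebraic bookkeeping in this final assembly: many third- and fourth-order cross terms must cancel or recombine correctly via the K\"ahler symmetry and the Monge-Amp\`ere equation \eqref{E:Monge-Ampere-Variation}. I would follow the computation in \cite{Schumacher} or the adapted version in \cite{Choi1}.
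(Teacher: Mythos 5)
Your proposal is correct and is exactly the computation the paper relies on: the paper gives no proof of its own but cites Schumacher and \cite{Choi1}, whose derivation is precisely your normal-coordinate expansion of $\Delta c(\rho)$ combined with the horizontal components of the fiberwise K\"ahler--Einstein identity $i\partial\bar\partial\log\det(h_{\alpha\bar\beta})=(n+1)\rho$. The bookkeeping you flag does close up as claimed (the $\sum|h_{\alpha\bar\beta s}|^2$ terms cancel and the mixed $(s,\bar\alpha)$ component of the identity absorbs the fourth-order cross terms), so no gap.
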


In the next subsection, we will show that $c(\rho)$ is bounded from below if $D$ is strongly pseudoconvex in $X$. Assuming this fact, we can apply Yau's almost maximum principle. This implies that there exists a sequence $\{x_k\}\subset D_y$ such that
\begin{itemize}
	\item[(i)] $\inf\limits_{x\in D_y} c(\rho)(x)=\lim\limits_{k\rightarrow\infty}c(\rho)(x_k)$,
	\item[(ii)] $\lim\limits_{k\rightarrow\infty}\nabla c(\rho)(x_k)=0$, and $\liminf\limits_{k\rightarrow\infty}\Delta c(\rho)(x_k)\geq0$.
\end{itemize}
It follows from Proposition \ref{PDE} that
$$
(n+1)c(\rho)(x_k)=\norm{\bar\partial v_{\rho}}^2+\Delta c(\rho)(x_k)\geq0.
$$
Taking $k\rightarrow\infty$, we have $c(\rho)\geq0$. Since the \ke metric is real-analytic, $c(\rho)$ and $v_{\rho}$ are also real-analytic. Therefore, we can apply the following proposition.
\begin{proposition} [cf. \cite{Schumacher,Choi1}]
Let $u$ and $f$ be real-analytic, non-negative, real function on a neighborhood $U\subset\mathbb{C}^n$ of $0$. Let $\omega_U$ be a real-analytic K\"{a}hler form on $U$ and $C$ be a positive constant. Suppose
$$
-\Delta_{\omega_U} u+Cu=f
$$
holds. If $u(0)=0$, then both $u$ and $f$ are vanish identically in a neighborhood of $0$.
\end{proposition}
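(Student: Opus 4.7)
The plan is to reduce the claim to the strong maximum principle for second-order linear elliptic operators with non-positive zeroth-order coefficient. The key observation is that if one sets $L := \Delta_{\omega_U} - C$, then the given PDE rewrites as $L(-u) = -\Delta_{\omega_U} u + C u = f \ge 0$, and $L$ has zeroth-order coefficient $-C \le 0$, which is precisely the sign required for the Hopf--Alexandrov strong maximum principle (see e.g.\ Gilbarg--Trudinger, Theorem~3.5).

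First I would shrink $U$, if necessary, to a connected neighborhood $U_0$ of $0$ on which the real-analytic K\"ahler form $\omega_U$ yields a uniformly elliptic Laplace--Beltrami operator $\Delta_{\omega_U}$ with bounded, continuous coefficients, so that the maximum-principle hypotheses are met. Next I would observe that $-u \le 0$ on $U_0$ because $u \ge 0$, and that $(-u)(0) = 0$, so $-u$ attains a non-negative maximum (namely $0$) at the interior point $0 \in U_0$. Applying the strong maximum principle to the supersolution $-u$ of the equation $Lv = 0$ then forces $-u$ to be constant on $U_0$, i.e.\ $u \equiv 0$ on $U_0$, whence $f = -\Delta_{\omega_U} u + Cu \equiv 0$ on $U_0$ as well.

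The main obstacle, which is really just careful bookkeeping, is to verify the sign conventions: one must ensure both that the zeroth-order coefficient of $L$ is non-positive (it is $-C$, and $C>0$) and that the extremum attained by $-u$ is non-negative (here it equals $0$); reversing either sign would invalidate the application. Once these are in place, the argument is essentially immediate. The real-analyticity hypothesis is not strictly needed for the local conclusion -- $C^2$ regularity of $u$, $f$, and $\omega_U$ would already suffice for the strong maximum principle -- but it is natural in the application since $c(\rho)$ and $v_\rho$ are real-analytic, and it would also permit unique continuation should one wish to propagate the conclusion to the whole connected component containing $0$.
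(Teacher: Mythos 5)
Your argument is correct. Note first that the paper itself gives no proof of this proposition, deferring to Schumacher and to the first author's earlier work; the argument there is genuinely different from yours. In those references one assumes $u\not\equiv 0$, expands $u$ at the zero $0$ into its Taylor series, observes that the lowest-order nonvanishing homogeneous term $P_k$ must be non-negative (since $u\ge 0$ and $u(0)=0$), and then compares degrees in the equation: since $f\ge 0$ and $Cu=O(|x|^k)$, the degree-$(k-2)$ part of $\Delta_{\omega_U}u$ forces $\Delta_0 P_k\le 0$ for the constant-coefficient principal part $\Delta_0$ at the origin, whence $P_k$ is superharmonic with an interior minimum $P_k(0)=0$, so $P_k\equiv 0$ by the mean-value inequality --- a contradiction. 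That route uses real-analyticity in an essential way. Your route via the Hopf strong maximum principle applied to $L=\Delta_{\omega_U}-C$ and $v=-u$ is sound: $Lv=f\ge 0$, the zeroth-order coefficient $-C$ is negative, and $v$ attains its non-negative maximum $0$ at the interior point $0$, so $v$ is constant on a connected neighborhood; it is more elementary, requires only $C^2$ data, and you are right that analyticity is then superfluous for the local statement (it matters in the paper only because $c(\rho)$ and $\|\bar\partial v_\rho\|^2$ are indeed real-analytic and one wants the dichotomy $c(\rho)\equiv 0$ or $c(\rho)>0$ on the whole fiber, which your remark on unique continuation addresses). One terminological quibble: with $L(-u)\ge 0$ the function $-u$ is a \emph{subsolution} of $Lv=0$, not a supersolution; the application of the maximum principle is nevertheless exactly as you state it.
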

The above proposition with $u:=c(\rho)$ implies that $c(\rho)\equiv0$ or $c(\rho)>0$. Now the conclusion of Theorem \ref{main} follows from the following:

\begin{proposition}\label{blowup}
For each fiber $D_y$,
\begin{equation}\label{E:blowup}
c(\rho)(x)\rightarrow\infty
\;\;\;
\textrm{as}
\;\;\;
x\rightarrow\partial D_y,
\end{equation}
provided $D$ is a strongly pseudoconvex domain.
\end{proposition}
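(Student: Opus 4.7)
The plan is to reduce the boundary blow-up to an asymptotic determinant computation for the model potential $\tilde h := -\log(-r)$. Writing $\rho = i\partial\bar\partial h$ with $h = \frac{1}{n+1}\Psi_U - \log(-r) + \varphi$, I decompose $h = \tilde h + g$ where $g := \frac{1}{n+1}\Psi_U + \varphi$ is the ``tame'' remainder. Strong pseudoconvexity of the total space $D$, together with the freedom to modify the defining function as in the discussion preceding Lemma \ref{L:Definingfunction}, lets me arrange that the full complex Hessian $L := (r_{A\bar B})$ (with $A,B$ running over all $n+1$ coordinates $\alpha,s$) is positive definite in a neighborhood of $\partial D$; its fiber restriction $L^{(f)} := (r_{\alpha\bar\beta})$ is then automatically positive definite near $\partial D_y$. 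Throughout I use the Schur-complement identity $c(\rho) = \det(h_{A\bar B}) / \det(h_{\alpha\bar\beta})$.

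The main computation is the asymptotics of $c(\tilde h)$. Since
\begin{equation*}
\tilde h_{A\bar B} = \frac{r_{A\bar B}}{-r} + \frac{r_A r_{\bar B}}{r^2} = \frac{1}{r^2}\bigl(|r|\, L_{A\bar B} + r_A r_{\bar B}\bigr),
\end{equation*}
the matrix determinant lemma applied to this rank-one update yields
\begin{equation*}
\det(\tilde h_{A\bar B}) = \frac{\det L}{|r|^{n+2}}\bigl(|r| + \langle\partial r,\partial r\rangle_{L^{-1}}\bigr), \qquad \det(\tilde h_{\alpha\bar\beta}) = \frac{\det L^{(f)}}{|r|^{n+1}}\bigl(|r| + \langle\partial_f r,\partial_f r\rangle_{(L^{(f)})^{-1}}\bigr),
\end{equation*}
where $\partial_f r$ is the gradient along the fiber. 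Dividing and letting $|r| \to 0$ gives
\begin{equation*}
c(\tilde h) \sim \frac{1}{|r|}\cdot\frac{\det L\,\langle\partial r,\partial r\rangle_{L^{-1}}}{\det L^{(f)}\,\langle\partial_f r,\partial_f r\rangle_{(L^{(f)})^{-1}}}.
\end{equation*}
The right-hand side is bounded below by a positive constant near $\partial D_y$: $\det L,\det L^{(f)} > 0$ by the choice of $r$; $\langle\partial r,\partial r\rangle_{L^{-1}} > 0$ because $dr \neq 0$ on $\partial D$; and $\langle\partial_f r,\partial_f r\rangle_{(L^{(f)})^{-1}} > 0$ because $\partial D_y$ is smooth at its boundary points, as $y\notin W$. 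Thus $c(\tilde h)$ already blows up at the rate $C/|r|$ for some $C > 0$.

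The final step is to compare $c(\rho)$ with $c(\tilde h)$. Smoothness of $\Psi_U$ on $\ov D$ controls its second derivatives uniformly, and Theorem \ref{T:CY2} gives the decay $\varphi_{\alpha\bar\beta} = O(|r|^{n-3/2-\epsilon})$ for fiber-direction second derivatives of $\varphi$. A Sherman-Morrison computation shows that the entries of $(\tilde h_{A\bar B})^{-1}$ are $O(|r|)$ near $\partial D$, so expanding $\det(\tilde h_{A\bar B} + g_{A\bar B}) = \det(\tilde h_{A\bar B})(1 + \mathrm{tr}((\tilde h_{A\bar B})^{-1} g) + \cdots)$ and likewise for the fiber determinant gives $c(\rho) = c(\tilde h)\cdot (1 + O(|r|^{\delta}))$ for some $\delta > 0$. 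The $C/|r|$ singularity therefore survives.

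The principal obstacle is controlling the mixed $s$-derivatives $\varphi_{\alpha\bar s},\varphi_{s\bar\beta},\varphi_{s\bar s}$ uniformly up to the boundary, since Theorem \ref{T:CY2} only supplies decay in fiber directions. The plan is to differentiate the Monge-Amp\`ere equation \eqref{E:Monge-Ampere-Variation} in the base variable $s$ and apply Schauder-type estimates in the Cheng-Yau weighted function spaces of \cite{Coevering}, following the boundary regularity strategy of \cite{Choi1, Choi2}. Once boundedness (or at worst sufficiently slow growth) of these mixed derivatives is in place, the perturbation step above goes through and the blow-up \eqref{E:blowup} follows.
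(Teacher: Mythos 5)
Your overall strategy coincides with the paper's: compare $c(\rho)$ with the geodesic curvature of the model form $i\partial\bar\partial(-\log(-r))$ (the paper's $\tau_r$), show the model blows up like $C/|r|$ using strong pseudoconvexity of $D$, and show that the perturbation by $\frac{1}{n+1}\Psi_U+\varphi$ does not disturb the leading order. Your determinant/Schur-complement bookkeeping is an equivalent repackaging of the paper's expansion $c(\rho)=c(\tau_r)+R_1+R_2+R_3+R_4$. There are, however, two genuine gaps.

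The step you defer --- uniform control of $\varphi_{s\bar s}$, $\varphi_{\alpha\bar s}$, $\varphi_{s\bar\beta}$ --- is precisely where the substance of the paper's proof lies; without it the perturbation argument does not close, so the proposal is a plan rather than a proof at its critical point. The paper's resolution is: $\varphi_{s\bar s}$ is bounded because the implicit-function-theorem argument behind Proposition \ref{smooth} gives smooth dependence of $\varphi_y$ on $y$ in the weighted spaces $\tilde{C}^{k+\epsilon}$, while $|\varphi_{\alpha\bar s}|,|\varphi_{s\bar\beta}|=O(|r|^{-1/2-\epsilon})$ follows from Schauder estimates applied to $\varphi_s,\varphi_{\bar s}$ (Section 3.3 of \cite{Choi2}). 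Note that these mixed derivatives are \emph{not} bounded --- they blow up, only slower than $c(\tau_r)\sim 1/|r|$ --- so your fallback ``sufficiently slow growth'' is the correct target, but it is exactly what must be proved and you do not prove it.

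Second, your asymptotics for $c(\tilde h)$ assume a defining function $r$ whose full complex Hessian $L=(r_{A\bar B})$ is positive definite near $\partial D$ \emph{and} which carries the Fefferman normalization of Lemma \ref{L:Definingfunction}, the latter being indispensable for the decay $\varphi_{\alpha\bar\beta}=O(|r|^{n-3/2-\epsilon})$ that your perturbation step uses. These two normalizations are in tension: the Fefferman correction multiplies $r$ by a fiberwise-chosen positive function, which preserves the Levi form on the complex tangent space of $\partial D$ but not positive definiteness of the full Hessian, while the usual repair $r\mapsto (e^{Kr}-1)/K$ destroys the Fefferman asymptotics. The repair is to observe that your leading coefficient $\det L\,\langle\partial r,\partial r\rangle_{L^{-1}}$ equals a bordered (Levi) determinant whose positivity needs only strong pseudoconvexity of $\partial D$, not $L>0$; this is essentially how the blow-up of $c(\tau_r)$ is obtained in Remark 2 of Section 5.1 of \cite{Choi1}, which the paper simply cites.
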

Note that this proposition also implies that $c(\rho)$ is bounded from below on $D_y$, as we required.

\subsection{Boundary behavior of the geodesic curvature}

In this subsection, we will prove Proposition \ref{blowup}.
Recall that we constructed a defining function $r$ of $D$ in Section \ref{regularity}. Then Lemma \ref{L:Definingfunction} implies that for each fiber $D_y$ with $y\in V\subset S'$, there exists a new defining function $\tilde{r}_y=\eta_y\cdot r_y$ such that the solution $\varphi_y$ of the Monge-Amp\`{e}re equation \eqref{E:Monge-Ampere2} satisfies
\begin{equation}\label{fiberwise_regularity}
\abs{(\varphi_y)_{\alpha\bar\beta}}\le O(\abs{r_y}^{n-3/2-\epsilon}),
\end{equation}
in a local coordinate system $(z^1,\ldots,z^n,s)$ on $U$ of a point $x_0\in\partial D_y$.
Furthermore, the proof tells us that the function $\eta$, defined by
$$
\eta(x):=\eta_y(x)
$$
where $y=p(x)\in V$ is positive smooth on $\ov{D}_V:=\ov{D}\cap p^{-1}(V)$. Therefore, without loss of generality, we may assume that there exists a defining function $r$ of $D$ such that the solution $\varphi_y$ of the Monge-Amp\`{e}re equation \eqref{E:Monge-Ampere2} with $r_y$ satisfies the estimate \eqref{fiberwise_regularity} (here, we use abuse of notation $r$ for the function $\tilde{r}:=\eta\cdot r$).

Define a $(1,1)$-form on $\ov{D}_V$ by
$$
\tau_r:=i\partial\ov\partial g_r,
$$
where $g_r:=-\log(-r)$.
In terms of local coordinates, we have
$$
\tau_r=i((g_r)_{s\ov{s}}ds\wedge d\ov{s}+(g_r)_{\alpha\ov{s}}dz^{\alpha}\wedge ds+(g_r)_{s\ov{\beta}}ds\wedge d\ov{z^{\beta}}+(g_r)_{\alpha\ov\beta}dz^{\alpha}\wedge d\ov{z^{\beta}}).
$$
Since $\tau_r$ is positive-definite along each fiber $D_y$ for $y\in V$, we can consider the geodesic curvature $c(\tau_r)$ of $\tau_r$. Direct computation yields the following:

\begin{proposition}
For each fiber $D_y$,
\begin{equation*}
c(\tau_r)(x)\rightarrow\infty
\;\;\;
\textrm{as}
\;\;\;
x\rightarrow\partial D_y,
\end{equation*}
provided $D$ is a strongly pseudoconvex domain.
\end{proposition}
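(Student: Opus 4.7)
The plan is to express the geodesic curvature of $\tau_r$ as a ratio of two bordered Monge-Amp\`ere (Fefferman-type) determinants---one for the total space $D$ and one for the fiber $D_y$---and then to conclude directly from the positivity of these functionals on strongly pseudoconvex boundaries.

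As already observed in the paragraph preceding the statement, the fiberwise block $((g_r)_{\alpha\bar\beta})$ is positive-definite, so the standard Schur complement identity gives
\[
c(\tau_r) \;=\; \frac{\det\bigl((g_r)_{I\bar J}\bigr)}{\det\bigl((g_r)_{\alpha\bar\beta}\bigr)},
\]
with capital indices running over all coordinates $(z^1,\ldots,z^n,s)$. Starting from the explicit formula $(g_r)_{I\bar J} = (-r\, r_{I\bar J} + r_I r_{\bar J})/r^2$ and applying the matrix-determinant lemma in $m$ variables, one obtains
\[
\det\bigl(-r\, r_{I\bar J} + r_I r_{\bar J}\bigr)_{m\times m} \;=\; (-1)^{m+1} r^{m-1}\, J_m[r],
\qquad J_m[r] := -\det\!\begin{pmatrix} r & r_{\bar J} \\ r_I & r_{I\bar J}\end{pmatrix},
\]
where $J_m[r]$ is the classical Fefferman Monge-Amp\`ere bordered determinant in $m$ complex variables. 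Applying this identity with $m = n+1$ to the numerator and $m = n$ to the denominator, and collecting the remaining powers of $r$, I expect the clean expression
\[
c(\tau_r) \;=\; \frac{J_{n+1}[r]}{(-r)\, J_n[r_y]}, \qquad r_y := r|_{X_y},
\]
noting that $J_n[r_y]$ depends only on the fiber restriction of $r$.

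The conclusion is then automatic. The strong pseudoconvexity of $D$ in $X$ forces $J_{n+1}[r] > 0$ on $\partial D$, and the strong pseudoconvexity of $D_y$ in $X_y$---which is built into the definition of a holomorphic family of strongly pseudoconvex domains for $y \in V \subset S'$---forces $J_n[r_y] > 0$ on $\partial D_y$. Both functionals are smooth up to the boundary and bounded below by a positive constant in a neighborhood of $\partial D_y$ inside $\overline{D_y}$, so the factor $1/(-r(x))$ drives $c(\tau_r)(x) \to \infty$ as $x \to \partial D_y$. The only delicate point is the sign-and-exponent bookkeeping in the determinantal identity of the middle step; I would fix conventions by first verifying the $m = 1$ case (and the unit-ball model) by hand, after which the rest is the \emph{direct computation} promised in the statement.
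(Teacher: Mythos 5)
Your argument is correct, and it is genuinely more self-contained than what the paper offers: the paper's entire proof of this proposition is a citation to Remark~2 in Section~5.1 of \cite{Choi1}, where the blow-up of $c(\tau_r)$ is obtained by a direct local computation of $(g_r)_{s\bar s}-(g_r)_{s\bar\beta}(g_r)^{\bar\beta\alpha}(g_r)_{\alpha\bar s}$ from $g_r=-\log(-r)$, isolating a leading term of the form (Levi form of $r$ in the base direction modulo the fiber directions)$/(-r)$. Your Schur-complement reformulation packages that same computation into the single identity
$$
c(\tau_r)=\frac{\det\bigl((g_r)_{I\bar J}\bigr)_{(n+1)\times(n+1)}}{\det\bigl((g_r)_{\alpha\bar\beta}\bigr)_{n\times n}}
=\frac{J_{n+1}[r]}{(-r)\,J_n[r_y]},
$$
and the bookkeeping you flag does check out: from $\det(A+uv^{*})=\det A+v^{*}\mathrm{adj}(A)\,u$ with $A=-r\,(r_{I\bar J})$ one gets $\det(-r\,r_{I\bar J}+r_Ir_{\bar J})=(-r)^{m}\det(r_{I\bar J})+(-r)^{m-1}r_{\bar J}\,\mathrm{adj}(r_{I\bar J})\,r_I=(-r)^{m-1}J_m[r]$, which agrees with your $(-1)^{m+1}r^{m-1}J_m[r]$, and the powers of $r^{-2m}$ from $(g_r)_{I\bar J}=(-r\,r_{I\bar J}+r_Ir_{\bar J})/r^2$ combine to leave exactly one factor of $1/(-r)$. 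What your approach buys is transparency about where the hypothesis enters: strong pseudoconvexity of the total space $D$ is used precisely to make $J_{n+1}[r]>0$ on $\partial D$ (hence bounded below on the compact set $\partial D_y\subset\partial D$), while strong pseudoconvexity of the fiber keeps $J_n[r_y]$ positive and bounded; the price is that you rely on two classical facts that should be stated explicitly, namely that $J_m[r]>0$ on the boundary is equivalent to strong pseudoconvexity for any defining function, and that $J_m$ transforms by $\abs{\det(\partial w/\partial z)}^2$ under holomorphic coordinate changes, so its sign is independent of the adapted coordinates $(z^1,\ldots,z^n,s)$. One small point to add for completeness: the Schur-complement identity presupposes $\det\bigl((g_r)_{\alpha\bar\beta}\bigr)>0$, which holds near $\partial D_y$ because $-\log(-r_y)$ is strictly plurisubharmonic there; since the proposition only concerns the limit $x\to\partial D_y$, this suffices.
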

\begin{proof}
See Remark 2 in Section 5.1 in \cite{Choi1}.
\end{proof}

Now the proof of Proposition \ref{blowup} is complete by the following proposition. This yields that the geodesic curvatures $c(\tau_r)$ and $c(\rho)$ go to infinity near the boundary of the same order.

\begin{proposition}\label{P:ratio_boundary_behavior}
\begin{equation}\label{E:boundedness}
\frac{c(\rho)}{c(\tau_r)}(x)\rightarrow1
\;\;\;
\textrm{as}
\;\;\;
x\rightarrow\partial D_y.
\end{equation}
\end{proposition}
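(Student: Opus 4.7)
The plan is to write $h = g_r + u$ with $u := \frac{1}{n+1}\Psi_U + \varphi$, and show that $c(\rho) - c(\tau_r)$ remains bounded as $x\to\partial D_y$; combined with the preceding proposition ($c(\tau_r)\to\infty$), this forces the ratio to tend to $1$.

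The first ingredient is that $u$ is bounded in $C^2$ up to $\partial D_y$: $\Psi_U$ is smooth on $\ov{D}_V$ by construction, and $\varphi_{\alpha\bar\beta}$ is bounded by \eqref{fiberwise_regularity}. Boundedness of the mixed derivatives $\varphi_{s\bar\beta}$ and $\varphi_{s\bar s}$ up to the boundary is obtained by differentiating \eqref{E:Monge-Ampere-Variation} in $s$ and running the Cheng--Yau Schauder argument on the resulting linear elliptic equation, as in Section 3 of \cite{Choi1}.

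Setting $G := ((g_r)_{\alpha\bar\beta})$ and $U := (u_{\alpha\bar\beta})$, one has $(h_{\alpha\bar\beta}) = G + U$, with $G$ divergent at $\partial D_y$ by completeness of $\omega^0_{r_y}$, so $\|G^{-1}U\|\to 0$ there and the Neumann series $(G+U)^{-1} = G^{-1} - G^{-1}UG^{-1} + \cdots$ converges on a collar of $\partial D_y$. Subtraction gives
$$
c(\rho) - c(\tau_r) = u_{s\bar s} - \Big[h_{s\bar\beta}h^{\bar\beta\alpha}h_{\alpha\bar s} - (g_r)_{s\bar\beta}(g_r)^{\bar\beta\alpha}(g_r)_{\alpha\bar s}\Big],
$$
and the second bracket expands into finitely many cross terms, each containing at least one factor from $u$ or its derivatives.

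The heart of the proof is to check that every cross term is $O(1)$. One invokes the Fefferman-type boundary asymptotics: in coordinates adapted to $\partial D$, the matrix $G$ has a rank-one leading blow-up $r_\alpha r_{\bar\beta}/r^2$ in the complex normal direction $v=(r_\alpha)$ plus an $O(1/|r|)$ remainder, so $G^{-1}$ has operator norm $O(|r|)$ and, crucially, $G^{-1}v = O(|r|^2)$ by Sherman--Morrison. Since $G_{\cdot\bar s}$ has leading rank-one part $r_{\bar s}\, v/r^2$, i.e.\ parallel to $v$, one obtains $G^{-1}G_{\cdot\bar s} = O(1)$. A typical first-order cross term such as $u_{s\bar\beta}G^{\bar\beta\alpha}G_{\alpha\bar s}$ is therefore $O(|u_{s\bar\cdot}|)\cdot O(1) = O(1)$; the second-order term $G_{s\bar\beta}(-G^{-1}UG^{-1})^{\bar\beta\alpha}G_{\alpha\bar s} = -(G^{-1}G_{\cdot\bar s})^* U (G^{-1}G_{\cdot\bar s})$ is also $O(1)$, and the remaining terms estimate similarly. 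Meanwhile $c(\tau_r)\to\infty$ at rate $\sim 1/|r|$ (from the determinant identity $\det\tau_r = \det G \cdot c(\tau_r)$ together with Fefferman's asymptotics $\det\tau_r \sim |r|^{-(n+2)}$ and $\det G \sim |r|^{-(n+1)}$), so $(c(\rho)-c(\tau_r))/c(\tau_r) \to 0$, which is \eqref{E:boundedness}. The main obstacle is precisely this cancellation: naive operator-norm bounds yield $O(|r|^{-1})$ for the cross terms, the same order as $c(\tau_r)$; only the Sherman--Morrison cancellation $G^{-1}v = O(|r|^2)$, ultimately a consequence of strong pseudoconvexity of the total space $D$ aligning the rank-one blow-up directions of $G$ and $G_{\cdot\bar s}$, brings the cross terms down to $O(1)$.
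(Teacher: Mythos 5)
Your overall strategy coincides with the paper's: both write $c(\rho)$ as $c(\tau_r)$ plus correction terms produced by inverting a perturbed fiberwise Hessian, both rely on the cancellation that $(g_r)^{\bar\beta\alpha}(g_r)_{\alpha\bar s}$ stays bounded even though $(g_r)_{\alpha\bar s}$ blows up like $|r|^{-2}$ in the complex normal direction (your Sherman--Morrison computation is exactly the content of Corollary 5.5 of \cite{Choi1} that the paper invokes), and both feed in the Cheng--Yau boundary regularity of $\varphi$. The paper organizes the perturbation in two stages, passing through the intermediate form $\tau^0_r=\tau^0+\tau_r$ and using the exact resolvent identities with Hermitian matrices $M$ and $N$ from equation (5.3) of \cite{Choi1} in place of your Neumann series; that difference is cosmetic.

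There is, however, one genuine gap: your claim that $u=\frac{1}{n+1}\Psi_U+\varphi$ is bounded in $C^2$ up to $\ov{D_y}$, in particular that the mixed derivatives $\varphi_{s\bar\beta}$ are bounded up to the boundary. The Cheng--Yau/Schauder machinery does not deliver this: differentiating the Monge--Amp\`ere equation in $s$ and applying Schauder estimates in the weighted spaces of bounded geometry yields only $|\varphi_{\alpha\bar s}|=O(|r|^{-1/2-\epsilon})$, which is what the paper proves following Section 3.3 of \cite{Choi2}; the Euclidean norms of base-direction derivatives genuinely degenerate at $\partial D_y$ because the uniform control is taken in the complete fiber metric. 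Consequently $c(\rho)-c(\tau_r)$ need not be $O(1)$ as you assert: the cross term quadratic in $\varphi_{s\bar\beta}$ is only $O(|r|^{-2\epsilon})$ and the term linear in $\varphi_{s\bar\beta}$ only $O(|r|^{-1/2-\epsilon})$. Your conclusion survives because these quantities are still $o(|r|^{-1})=o(c(\tau_r))$, but the statement you should prove and propagate through the bookkeeping is $c(\rho)-c(\tau_r)=o(c(\tau_r))$, not boundedness of the difference.
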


\begin{proof}
Fix a point $x_0\in\partial D_y$. Let $U$ be a neighborhood of $x_0$ in $X'$ and $(z^1,\ldots,z^n,s)$ be coordinates on $U$. Denote by $\tau^0$ the smooth $(1,1)$-form on $X'$ which is given by
$$
\tau^0:=\frac{1}{n+1}\Theta_{h_{X'/Y'}}.
$$
Define a function $g^0:=\frac{1}{n+1}\Psi_U$. Then $\tau^0$ can be written as $\tau^0=i\partial\ov\partial g^0$ on $U$, i.e.,
$$
\tau^0=i((g^0)_{s\ov{s}}ds\wedge d\ov{s}+(g^0)_{\alpha\ov{s}}dz^{\alpha}\wedge ds+(g^0)_{s\ov{\beta}}ds\wedge d\ov{z^{\beta}}+(g^0)_{\alpha\ov\beta}dz^{\alpha}\wedge d\ov{z^{\beta}}).
$$
We also define a smooth $(1,1)$-form $\tau^0_r$ on $\ov{D}_V$ by
$$
\tau^0_r:=\tau^0+\tau_r.
$$
Then $\tau^0_r$ can be expressed as $\tau^0_r=i\partial\ov\partial g^0_r$, where $g^0_{r}:=g^0+g_r$ is a function on $U\cap \ov{D}_V$. 
In terms of local coordinates, we have
$$
\tau^0_r=i((g^0_r)_{s\ov{s}}ds\wedge d\ov{s}+(g^0_r)_{\alpha\ov{s}}dz^{\alpha}\wedge ds+(g^0_r)_{s\ov{\beta}}ds\wedge d\ov{z^{\beta}}+(g^0_r)_{\alpha\ov\beta}dz^{\alpha}\wedge d\ov{z^{\beta}}).
$$

Note that
$$
h:=\frac{1}{n+1}\Psi_U-\log(-r)+\varphi=g^0_r+\varphi.
$$
Then the equation $(5.3)$ in \cite{Choi1} implies that we have following expression:
$$
h^{\bar\beta\alpha}-(g^0_{{r}})^{\bar\beta\alpha}=(g^0_{{r}})^{\bar\beta\gamma}N_{\gamma\bar\delta}(g^0_{{r}})^{\bar\delta\alpha},
$$
where $N=(N_{\alpha\bar\beta})$ is a hermitian $n\times{n}$ matrix.
Now we can compute $c(\rho)$ as follows:
\begin{align*}
c(\rho)
&=
h_{s\bar{s}}-h_{s\bar\beta}h^{\bar\beta\alpha}h_{\alpha\bar{s}} \\	
&=
h_{s\bar{s}}-h_{s\bar\beta}
\paren{(g^0_{{r}})^{\bar\beta\alpha}
+
(g^0_{{r}})^{\bar\beta\gamma}N_{\gamma\bar\delta}(g^0_{{r}})^{\bar\delta\alpha}
}
h_{\alpha\bar{s}} \\
&=
(g^0_{{r}})_{s\bar{s}}+\varphi_{s\bar{s}}
-
\paren{(g^0_{{r}})_{s\bar\beta}+\varphi_{s\bar\beta}}\paren{(g^0_{{r}})^{\bar\beta\alpha}
+
(g^0_{{r}})^{\bar\beta\gamma}N_{\gamma\bar\delta}(g^0_{{r}})^{\bar\delta\alpha}
}
\paren{(g^0_{{r}})_{\alpha\bar{s}}+\varphi_{\alpha\bar{s}}} \\
&=
c(\tau^0_{{r}}) + R_1 + R_2,
\end{align*}
where the remaining terms $R_1$ and $R_2$ are defined by
\begin{align*}
R_1:&=
	\varphi_{s\bar{s}}+\paren{(g^0_{{r}})_{s\bar\beta}(g^0_{{r}})^{\bar\beta\alpha}\varphi_{\alpha\bar{s}}
	+\varphi_{s\bar\beta}(g^0_{{r}})^{\bar\beta\alpha}(g^0_{{r}})_{\alpha\bar{s}}
	+\varphi_{s\bar\beta}(g^0_{{r}})^{\bar\beta\alpha}\varphi_{\alpha\bar{s}}
	},\\
R_2
	:&=\paren{(g^0_{{r}})_{s\bar\beta}+\varphi_{s\bar\beta}}
	(g^0_{{r}})^{\bar\beta\gamma}N_{\gamma\bar\delta}(g^0_{{r}})^{\bar\delta\alpha}
	\paren{(g^0_{{r}})_{\alpha\bar{s}}+\varphi_{\alpha\bar{s}}}.
\end{align*}
Again the equation $(5.3)$ in \cite{Choi1} implies that
$$
(g^0_{{r}})^{\bar\beta\alpha}-(g_{{r}})^{\bar\beta\alpha}=(g_{{r}})^{\bar\beta\gamma}M_{\gamma\bar\delta}(g_{{r}})^{\bar\delta\alpha},
$$
where $M=(M_{\alpha\bar\beta})$ is a hermitian $n\times{n}$ matrix.
Then, we have
\begin{align*}
c(\tau^0_{{r}})
&=
(g^0_{{r}})_{s\bar{s}}-(g^0_{{r}})_{s\bar\beta}(g^0_{{r}})^{\bar\beta\alpha}(g^0_{{r}})_{\alpha\bar{s}} \\	
&=
(g^0_{{r}})_{s\bar{s}}-(g^0_{{r}})_{s\bar\beta}
\paren{(g_{{r}})^{\bar\beta\alpha}
+
(g_{{r}})^{\bar\beta\gamma}M_{\gamma\bar\delta}(g_{{r}})^{\bar\delta\alpha}
}
(g^0_{{r}})_{\alpha\bar{s}} \\
&=
(g^0)_{s\bar{s}}+(g_{r})_{s\bar{s}}
-
\paren{(g^0)_{s\bar\beta}+(g_{r})_{s\bar\beta}}\paren{(g_{{r}})^{\bar\beta\alpha}
+
(g_{{r}})^{\bar\beta\gamma}M_{\gamma\bar\delta}(g_{{r}})^{\bar\delta\alpha}
}
\paren{(g^0)_{\alpha\bar{s}}+(g_{r})_{\alpha\bar{s}}} \\
&=
c(\tau_{{r}}) + R_3 + R_4,
\end{align*}
where the remaining terms $R_3$ and $R_4$ are given by
\begin{align*}
R_3&:=
	(g^0)_{s\bar{s}}+\paren{(g_{r})_{s\bar\beta}(g_{{r}})^{\bar\beta\alpha}(g^0)_{\alpha\bar{s}}
	+(g^0)_{s\bar\beta}(g_{{r}})^{\bar\beta\alpha}(g_{r})_{\alpha\bar{s}}
	+(g^0)_{s\bar\beta}(g_{{r}})^{\bar\beta\alpha}(g^0)_{\alpha\bar{s}}
	},\\
R_4
	&:=\paren{(g_{r})_{s\bar\beta}+(g^0)_{s\bar\beta}}
	(g_{{r}})^{\bar\beta\gamma}M_{\gamma\bar\delta}(g_{{r}})^{\bar\delta\alpha}
	\paren{(g_{r})_{\alpha\bar{s}}+(g^0)_{\alpha\bar{s}}}.
\end{align*}

Now we have
$$
\frac{c(\rho)}{c(\tau_r)}=1+\frac{R_1+R_2}{c(\tau_r)}+\frac{R_3+R_4}{c(\tau_r)}.
$$
Since $g^0$ is smooth on $U\cap \ov{D}_V$, all derivatives are smooth and bounded in $U\cap \ov{D}_V$. It is easy to see that $(g_{{r}})^{\bar\beta\alpha}$ is also smooth on $U\cap \ov{D}_V$ and $(g_{{r}})^{\bar\beta\alpha}=O(\abs{r})$. This implies that $(g_{r})_{s\bar\beta}(g_{{r}})^{\bar\beta\alpha}$ and $(g_{{r}})^{\bar\beta\alpha}(g_{r})_{\alpha\bar{s}}$ are bounded (cf. Corollary 5.5 in \cite{Choi1}). Therefore, the remaining term $R_3$ is bounded. The term $R_4$ is also bounded by the following lemma.

\begin{lemma} \label{L:identity}
For any $\alpha,\beta\in \{1,\ldots,n\}$, we have
$$
M_{\alpha\bar\beta}\in C^\infty(U\cap \ov{D}_V) 
$$
so that $(g^0_{{r}})^{\bar\beta\alpha}\in{C}^\infty(U\cap \ov{D}_V)$. In particular, $(g^0_{{r}})^{\bar\beta\alpha}=O(\abs{r})$.
\end{lemma}

\begin{proof}
The proof is essentially same as the proof of Lemma 5.3 in \cite{Choi1}.
\end{proof}
Hence, it is enough to show that 
\begin{equation*}
\frac{R_1+R_2}{c(\tau_{{r}})}(x)\rightarrow0
\;\;\;
\textrm{as}
\;\;\;
x\rightarrow x_0\in\partial D_y.
\end{equation*}
As in the estimate of $R_3$, Lemma \ref{L:identity} implies that $(g^0_{r})_{s\bar\beta}(g^0_{{r}})^{\bar\beta\alpha}$ and $(g^0_{{r}})^{\bar\beta\alpha}(g^0_{r})_{\alpha\bar{s}}$ are bounded on $U\cap \ov{D}_V$ (cf. Corollary 5.5 in \cite{Choi1}). 
The proof of Proposition \ref{smooth} implies that $\varphi_{s\ov{s}}$ is bounded.
Recall that $\varphi$ satisfies \eqref{fiberwise_regularity}. Applying the Schauder estimates to $\varphi_s$ and $\varphi_{\ov{s}}$, we can show that
$$
|\varphi_{\alpha\ov{s}}|=O(|r|^{-1/2-\epsilon}) \text{\ \ and\ \ }  |\varphi_{s\ov{\beta}}|=O(|r|^{-1/2-\epsilon}),
$$
for some $\epsilon$ with $0<\epsilon<1/2$ (for detailed proof, see Section 3.3 in \cite{Choi2}). Then above estimates imply that
\begin{equation*}
\frac{R_1}{c(\tau_{{r}})}(x)\rightarrow0
\;\;\;
\textrm{as}
\;\;\;
x\rightarrow x_0\in\partial D_y.
\end{equation*}
and that $R_2$ is bounded by the following lemma.

\begin{lemma} [cf. Lemma 5.4 in \cite{Choi1}]\label{L:identity2}
Let $N^y:=(N^y_{\alpha\bar\beta})$ be the $n\times n$ matrix with $N^y_{\alpha\bar\beta}:=N_{\alpha\bar\beta}|_{D_y}$.
For any $\alpha,\beta\in \{1,\ldots,n\}$, we have
$$
N^y_{\alpha\bar\beta}\in C^\infty(U\cap D_y)\cap C^{n-3/2-\epsilon}(U\cap \ov{D}_y),
$$
with $||N^y||=O(|r_y|^{n-3/2-\epsilon})$.
\end{lemma}
This completes the proof of Proposition \ref{P:ratio_boundary_behavior}.
\end{proof}


\section{Extension of the curvature form}

In this section, we will prove Theorem \ref{extension}, using the argument of P\u{a}un in \cite{Paun}. Main tools are Demailly's approximation theorem (Theorem \ref{T:Demailly}) and the Ohsawa-Takegoshi theorem (Theorem \ref{T:BP}). However, there is no uniform boundedness of the volumes of fibers since the fibers are noncompact. To resolve this difficulty, we need another complete metric $\widetilde{\omega}_D$ for applying the Schwarz lemma (Theorem \ref{T:Schwarz}).

Fix an arbitrary point $x_0\in D$ with $y_0:=p(x_0)\in W$. Take a coordinate neighborhood $U$ of $x_0$ in $D$.
\begin{align*}
	\Theta_{h_{D'/S'}}=(n+1)\rho&=\Theta_{h_{X'/Y'}}+(n+1){i\partial\bar\partial}(-\log(-r))+(n+1){i\partial\bar\partial}\varphi\\
	&={i\partial\bar\partial}(\Psi_U-(n+1)\log(-r)+(n+1)\varphi),
\end{align*}
where $\Psi_U$ is a local weight function on $U$, defined in \eqref{hermitian}.
Define a function $\theta:U\cap D'\rightarrow \mathbb{R}$ by
$$
\theta:=(n+1)h=\Psi_U-(n+1)\log(-r)+(n+1)\varphi,
$$
which is a local potential function of $\Theta_{h_{D'/S'}}$. Since $\theta$ is strictly plurisubharmonic by Theorem \ref{main}, it is enough to show that there exists a neighborhood $\widetilde{U}\subset\subset U$ of $x_0$ such that $\theta$ is bounded from above. More precisely, for any $x\in \widetilde{U}\cap D'$, we want to show that there exists a constant $C$ independent of $y:=p(x)$ satisfying
$$
\theta_y:=\theta|_{U_y}=\Psi_U|_{U_y}-(n+1)\log(-r_y)+(n+1)\varphi_y <C
$$
on $\widetilde{U}_y$.

\begin{remark}
Yau's $C^0$-estimate implies that $(n+1)\sup_{D_y}\varphi_y<\sup_{D_y}F_y$. Although $F_y$ is bounded from above on $\ov{D_y}$, we do not know whether there exists a constant $C$, which does not depend on $y$ satisfying $F_y<C$.
\end{remark}

First, we approximate $\theta_y$ by the logarithm of absolute values of holomorphic functions, using the following theorem.

\begin{theorem}[Demailly]\label{T:Demailly}
Let $\mathcal{H}^m_y$ be the Hilbert space defined as follows
$$
\mathcal{H}^m_y:=\{f\in \mathcal{O}(U_y): ||f||^2_{m,y}:=\int_{U_y}|f|^2e^{-m\theta_y}(\omega^{KE}_y)^n<\infty\}
$$
Then for every $x\in U_y$, we have
$$
\theta_y(x)=\lim_{m\rightarrow\infty}\sup\frac{1}{m}\log|f(x)|^2,
$$
where the supremum is taken over all $f\in \mathcal{H}^m_y$ satisfying $||f||^2_{m,y}\leq1$.
\end{theorem}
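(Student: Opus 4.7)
The plan is to establish the equality via two matching one-sided bounds: $\limsup_{m\to\infty} a_m(x) \le \theta_y(x)$ and $\liminf_{m\to\infty} a_m(x) \ge \theta_y(x)$, where $a_m(x) := \sup\{\frac{1}{m}\log|f(x)|^2 : f \in \mathcal{H}^m_y,\ \|f\|_{m,y} \le 1\}$. The upper inequality is a routine consequence of the sub-mean-value property for holomorphic functions; the lower inequality is the substantive half, resting on an $L^2$-extension from the point $\{x\}$ in the spirit of Ohsawa-Takegoshi.

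For the upper bound, fix $\epsilon > 0$ and use the upper semicontinuity of the plurisubharmonic function $\theta_y$ to choose a Euclidean ball $B = B(x, r) \subset U_y$ on which $\theta_y \le \theta_y(x) + \epsilon$. For any competitor $f \in \mathcal{H}^m_y$ with $\|f\|_{m,y} \le 1$, plurisubharmonicity of $|f|^2$ gives
\begin{equation*}
|f(x)|^2 \le \frac{C_n}{r^{2n}} \int_B |f|^2 \, dV \le \frac{C_n \Lambda}{r^{2n}} e^{m(\theta_y(x)+\epsilon)} \int_B |f|^2 e^{-m\theta_y} (\omega_y^{KE})^n,
\end{equation*}
where $\Lambda$ controls the density of $dV$ with respect to $(\omega_y^{KE})^n$ on $\overline{B}$. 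Taking logarithms, dividing by $m$, and letting $m \to \infty$ and then $\epsilon \to 0$ yields $\limsup_{m\to\infty} a_m(x) \le \theta_y(x)$.

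For the lower bound, shrink $U$ so that $U_y$ is a pseudoconvex coordinate chart (e.g.\ a polydisc) containing $x$. Since $\theta_y$ is strictly plurisubharmonic on $U_y$ by Theorem \ref{main}, an Ohsawa-Takegoshi type extension theorem applied to the $0$-dimensional subvariety $\{x\} \subset U_y$ with weight $m\theta_y$ produces a holomorphic function $f_m \in \mathcal{O}(U_y)$ with $f_m(x) = 1$ and
\begin{equation*}
\int_{U_y} |f_m|^2 e^{-m\theta_y} (\omega_y^{KE})^n \le C e^{-m\theta_y(x)},
\end{equation*}
where $C$ depends only on the reference polydisc and not on $m$. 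After rescaling $f_m$ to have unit norm in $\mathcal{H}^m_y$, the value at $x$ satisfies $|f_m(x)|^2 \ge C^{-1} e^{m\theta_y(x)}$, so that $\frac{1}{m}\log|f_m(x)|^2 \ge \theta_y(x) - \frac{\log C}{m}$. Letting $m \to \infty$ gives the matching lower bound.

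The chief obstacle is the extension step: one needs a version of Ohsawa-Takegoshi allowing extension from a point with the sharp pointwise estimate $C e^{-m\theta_y(x)}$ on the right, typically obtained via H\"ormander's $L^2$ estimate applied to the auxiliary weight $m\theta_y + 2n\log|z-x|$. The plurisubharmonicity of $\theta_y$ supplied by Theorem \ref{main} is essential here, as is the pseudoconvexity of the local chart $U_y$. A minor technical point is the comparability of $(\omega_y^{KE})^n$ with the Euclidean volume form on $\overline{U_y}$, which holds because $\overline{U_y} \subset D_y$ sits in the interior where the K\"ahler-Einstein metric is smooth and non-degenerate; consequently the choice of measure in the definition of $\mathcal{H}^m_y$ contributes only an $O(1/m)$ error that is absorbed in the final limit.
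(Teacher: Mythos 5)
The paper does not prove Theorem \ref{T:Demailly}; it is quoted as a known result of Demailly and used as a black box, so there is no internal proof to compare against. Your argument is, in substance, Demailly's original proof of this approximation theorem, and it is correct: the upper bound via the sub-mean-value inequality for $|f|^2$ on a small ball together with the (upper semi)continuity of $\theta_y$, and the lower bound via an Ohsawa--Takegoshi (or H\"ormander with the auxiliary weight $m\theta_y+2n\log|z-x|$) extension from the point $\{x\}$ with a constant depending only on the diameter of the domain and not on $m$. Two small remarks: the lower bound genuinely requires $U_y$ to be a bounded pseudoconvex set (a ball or polydisc), which is implicit in the choice of the coordinate neighborhood $U$ in the paper's application, and care is needed that the extended function is a competitor on all of $U_y$ (extend on a pseudoconvex set containing $U_y$, rather than on a shrunken one, or simply take $U_y$ itself to be a ball); also, plain plurisubharmonicity of $\theta_y$ suffices here --- the strict positivity from Theorem \ref{main} is not needed for this step, only for concluding that the bounded-above potential defines a positive current. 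The comparability of $(\omega^{KE}_y)^n$ with the Euclidean volume form on $\overline{U_y}\subset D_y$ is correctly identified as contributing only an $O(1/m)$ error.
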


Note that for all $f\in \mathcal{H}^m_y$ satisfying $||f||^2_{m,y}\leq1$, we have
\begin{align*}
\int_{U_y}|f|^{2/m}e^{-\theta_y}(\omega^{KE}_y)^n&\leq\Big(\int_{U_y}|f|^2e^{-m\theta_y}(\omega^{KE}_y)^n\Big)^{1/m}\Big(\int_{U_y}(\omega^{KE}_y)^n\Big)^{\frac{m-1}{m}}\\
&\leq \Big(\int_{U_y}(\omega^{KE}_y)^n\Big)^{\frac{m-1}{m}}=:\Big(Vol_{KE}(U_y)\Big)^{\frac{m-1}{m}}.
\end{align*}
On the other hand, the Monge-Amp\`{e}re equation (\ref{E:Monge-Ampere}) implies that
$$
e^{-\theta_y}(\omega^{KE}_y)^n=e^{-\Psi_{U}}e^{-(n+1)(-\log(-r_y)+\varphi_y )}(\omega^{KE}_y)^n
=\frac{dV_z}{(\omega_X)^n\wedge p^{\ast}dV_s}(\omega_y)^n=\frac{dV_z}{p^{\ast}dV_s}.
$$
Therefore, we have
$$
\int_{U_y}|f|^{2/m}\frac{dV_z}{p^{\ast}dV_s}=\int_{U_y}|f|^{2/m}e^{-\theta_y}(\omega^{KE}_y)^n\leq\Big(Vol_{KE}(U_y)\Big)^{\frac{m-1}{m}}.
$$
Now we apply the following $L^{2/m}$ version of Ohsawa-Takegoshi theorem:

\begin{theorem} [Berndtsson, P\u{a}un  \cite{Berndtsson_Paun}]\label{T:BP}
There exists a holomorphic function $\tilde{f}$ on $U$ and positive constant $C_0$ independent of $m$ and $y$ satisfying $\tilde{f}|_{U_y}=f$ on $U_y$ and
$$
\int_U|\tilde{f}|^{2/m}dV_z\leq C_0\int_{U_y}|f|^{2/m}\frac{dV_z}{p^{\ast}dV_s}.
$$
\end{theorem}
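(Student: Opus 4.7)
The plan is to reduce this $L^{2/m}$-extension statement to the classical $L^{2}$ Ohsawa-Takegoshi-Manivel-Demailly extension theorem via a bootstrap on the plurisubharmonic weight. The heuristic target is the weight $\Phi := \frac{m-1}{m}\log|f|^2$: if this were a psh function on all of $U$ (not just on $U_y$), the classical $L^2$ extension would immediately yield $\tilde f$ with
$$
\int_U |\tilde f|^2 e^{-\Phi}\,dV_z \;\leq\; C \int_{U_y} |f|^2 e^{-\Phi}\,\frac{dV_z}{p^*dV_s} \;=\; C \int_{U_y} |f|^{2/m}\,\frac{dV_z}{p^*dV_s},
$$
because $|f|^2 e^{-\Phi}=|f|^{2/m}$ on $U_y$. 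The obstacle is that one needs $f$ already extended in order to \emph{define} $\Phi$ on $U$.

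To break the circularity, I would iterate. Start with any holomorphic extension $\tilde f^{(0)}\in\mathcal{O}(U)$ of $f$, which exists because $U$ is a coordinate chart and hence Stein. Set inductively $\Phi_k := \tfrac{m-1}{m}\log|\tilde f^{(k)}|^2$, a psh function on $U$ with analytic singularities, and apply the classical $L^2$-extension with weight $\Phi_k$ to produce a new extension $\tilde f^{(k+1)}$ of $f$ satisfying
$$
\int_U \frac{|\tilde f^{(k+1)}|^2}{|\tilde f^{(k)}|^{2(m-1)/m}}\,dV_z \;\leq\; C \int_{U_y} |f|^{2/m}\,\frac{dV_z}{p^*dV_s} \;=:\; Cb,
$$
since $\tilde f^{(k)}|_{U_y}=f$ by induction. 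Factor $|\tilde f^{(k+1)}|^{2/m} = \bigl(|\tilde f^{(k+1)}|^2/|\tilde f^{(k)}|^{2(m-1)/m}\bigr)^{1/m} \cdot |\tilde f^{(k)}|^{2(m-1)/m^2}$ and apply H\"older with exponents $m,\,m/(m-1)$ to derive
$$
a_{k+1} \;\leq\; (Cb)^{1/m}\, a_k^{(m-1)/m},\qquad a_k := \int_U |\tilde f^{(k)}|^{2/m}\,dV_z.
$$
This contractive recursion has attractive fixed point $a_* = Cb$, so $a_k$ is ultimately bounded by a universal multiple of $b$. Extracting a locally uniformly convergent subsequence of $\{\tilde f^{(k)}\}$ (normal families for uniformly $L^{2/m}$-bounded holomorphic functions) yields the sought $\tilde f$ with $\tilde f|_{U_y}=f$ and the required $L^{2/m}$ bound.

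The main obstacle, and what requires the most care, is ensuring $C_0$ is independent of both $m$ and $y$. Independence of $y$ follows because the Ohsawa-Takegoshi constant in each step depends only on the local geometry of $p$ and on uniform curvature bounds for the weights, both of which are locally uniform near $y_0$. Independence of $m$ is the subtler point, but it is built into the H\"older computation: the fixed point of $a\mapsto (Cb)^{1/m}a^{(m-1)/m}$ is exactly $Cb$, with no exponential blow-up in $m$. A secondary technical issue is the singularity of $\log|\tilde f^{(k)}|^2$ along its zero locus; this is handled by regularizing to $\tfrac{m-1}{m}\log(|\tilde f^{(k)}|^2+\epsilon)$, running the iteration for fixed $\epsilon>0$, and letting $\epsilon\downarrow 0$ at the end via monotone convergence.
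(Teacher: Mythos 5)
The paper does not actually prove this statement; it is quoted verbatim as a result of Berndtsson and P\u{a}un, so there is no internal proof to compare against. Your iteration scheme is, in substance, the argument that Berndtsson and P\u{a}un themselves use for their $L^{2/m}$ extension theorem: extend with the singular weight $\frac{m-1}{m}\log|\tilde f^{(k)}|^2$ built from the previous extension, apply the classical $L^2$ Ohsawa--Takegoshi theorem (whose constant depends only on the diameter of $p(U)$, hence is independent of $m$, of the weight, and of $y$), and use H\"older to obtain the contractive recursion $a_{k+1}\leq (Cb)^{1/m}a_k^{(m-1)/m}$ whose fixed point is $Cb$; normal families then produce the limit extension. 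The exponent bookkeeping in your H\"older step is correct, and the $\epsilon$-regularization of the weight is the standard way to handle the analytic singularities.

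Two points deserve more care than you give them. First, the base case: a Cartan--Theorem-B extension $\tilde f^{(0)}$ of $f$ to the Stein chart $U$ need not have finite $L^{2/m}$ norm on $U$, since nothing controls it near $\partial U$; you must either pre-shrink $U$ (harmless here, since the statement is local and the application in the paper immediately passes to $\widetilde U\subset\subset U$ anyway) or obtain the first extension by an Ohsawa--Takegoshi step with trivial weight, which would require $f\in L^2(U_y)$ rather than merely $L^{2/m}$. Second, when you invoke the $L^2$ extension with the weight $\Phi_k$, you should check that the hypothesis $\int_{U_y}|f|^2e^{-\Phi_k}\,dV_z/p^*dV_s<\infty$ holds at every stage; it does, because $\tilde f^{(k)}|_{U_y}=f$ makes this integral exactly $\int_{U_y}|f|^{2/m}\,dV_z/p^*dV_s$, but this identity is the linchpin of the induction and is worth stating explicitly. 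With those repairs the argument is complete and reproduces the cited theorem.
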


Choose $\widetilde{U}\subset\subset U$ so that the geodesic ball $B_{\epsilon}(x)$ of radius $\epsilon$ satisfying $B_{\epsilon}(x)\subset U$ for all $x\in V$. Then mean value inequality implies that
$$
|f(x)|^{2/m}=|\tilde{f}(x)|^{2/m}\leq C_{\epsilon}\int_{B_{\epsilon}(x)}|\tilde{f}|^{2/m}dV_z\leq C \Big(Vol_{KE}(U_y)\Big)^{\frac{m-1}{m}}.
$$
Recall that
$$
\theta_y(x)=\lim_{m\rightarrow\infty}\sup\frac{1}{m}\log|f(x)|^2=\lim_{m\rightarrow\infty}\sup\log|f(x)|^{2/m}.
$$
Therefore, to complete the proof of Theorem \ref{extension}, it is enough to show that there exists a positive constant $C$ independent of $y$ satisfying 
$$
Vol_{KE}(U_y):=\int_{U_y}(\omega^{KE}_y)^n<C.
$$

Note that $D$ admits a complete K\"{a}hler metric $\widetilde{\omega}_D$ satisfying $Scal(\widetilde{\omega}_y)>C$ with $\widetilde{\omega}_y:=\widetilde{\omega}_D|_{D_y}$. We apply the following Schwarz lemma for volume forms.

\begin{theorem}[Mok, Yau \cite{Mok_Yau}]\label{T:Schwarz}
Let $(M,h)$ be a complete Hermitian manifold with $Scal(h)\geq-K_1$, and let $N$ be a complex manifold of the same dimension with a volume form $V$ such that the Ricci form is negative definite and
$$
(\frac{i}{2}\partial\ov{\partial}\log V)^n\geq K_2 V.
$$
Suppose $f:M\rightarrow N$ is a holomorphic map and the Jacobian is nonvanishing at
one point. Then $K_1>0$ and
$$
f^{\ast}V\leq\frac{K_1^n}{n^nK_2}(\omega_h)^n,
$$
where $\omega_h$ is the associate $(1,1)$-form of $h$.
\end{theorem}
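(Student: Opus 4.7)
My plan is to apply a generalized maximum-principle argument to the ratio
$$
u := \frac{f^{\ast}V}{\omega_h^{n}},
$$
a non-negative function on $M$ that is strictly positive precisely where the Jacobian of $f$ does not vanish. Working in local holomorphic coordinates one has $f^{\ast}V = |\det(\partial f)|^{2}\cdot f^{\ast}(\text{local potential of }V)$, and since $\log|\det(\partial f)|^{2}$ is pluriharmonic wherever $\det(\partial f)\neq 0$,
$$
i\partial\bar\partial\log f^{\ast}V \;=\; f^{\ast}(i\partial\bar\partial\log V).
$$
Tracing against $\omega_h$ and using that $-i\partial\bar\partial\log\omega_h^{n}$ is the Chern-Ricci form of $h$, whose $\omega_h$-trace is the Chern scalar curvature $S(h)\ge -K_{1}$, one obtains the pointwise identity
$$
\Delta_{h}\log u \;=\; \mathrm{tr}_{\omega_h}\!\bigl(f^{\ast}(i\partial\bar\partial\log V)\bigr) \;+\; S(h).
$$

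The first term on the right is non-negative, because the hypothesis $\mathrm{Ric}(V)<0$ is equivalent to $i\partial\bar\partial\log V\ge 0$. The arithmetic-geometric mean inequality applied to the eigenvalues of $f^{\ast}(i\partial\bar\partial\log V)$ with respect to $\omega_h$ gives
$$
\bigl[\mathrm{tr}_{\omega_h}\!\bigl(f^{\ast}(i\partial\bar\partial\log V)\bigr)\bigr]^{n} \;\ge\; n^{n}\cdot\frac{f^{\ast}(i\partial\bar\partial\log V)^{n}}{\omega_h^{n}},
$$
and the determinantal hypothesis $(\tfrac{i}{2}\partial\bar\partial\log V)^{n}\ge K_{2}V$ rewrites the numerator on the right as a positive constant multiple of $u$. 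Combining these yields a differential inequality of the schematic form
$$
\Delta_{h}\log u \;\ge\; C\,K_{2}^{1/n}\,u^{1/n} \;-\; K_{1},
$$
where $C$ is an explicit constant depending only on $n$ and the normalization conventions.

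To close the argument I would invoke a Yau-type generalized maximum principle on the complete Hermitian manifold $(M,h)$: there exists a sequence $\{p_{k}\}\subset M$ along which $u^{1/n}(p_{k})\to\sup u^{1/n}$ and $\Delta_{h}u^{1/n}(p_{k})$ is asymptotically non-positive. Passing to the limit forces $\sup u$ to be dominated by a multiple of $K_{1}^{n}/(n^{n}K_{2})$; moreover, since $u\not\equiv 0$ by the non-vanishing-Jacobian assumption, the left-hand side of the limiting inequality is strictly positive, which forces $K_{1}>0$. The main obstacle is that Yau's maximum principle in its classical form requires a lower bound on the Ricci curvature, whereas here only a lower bound on the \emph{scalar} curvature is available; circumventing this will require working directly with $u^{1/n}$, for which the trace bound together with completeness of $h$ is just enough to run a cutoff/barrier argument replacing the full Ricci hypothesis.
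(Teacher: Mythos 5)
First, a point of comparison: the paper does not prove this statement at all --- it is imported verbatim from Mok--Yau and used as a black box, so there is no internal proof to measure your attempt against. Your skeleton is the standard one for this theorem: set $u=f^{*}V/\omega_h^{n}$, note that $i\partial\bar\partial\log f^{*}V=f^{*}(i\partial\bar\partial\log V)$ off the zero locus of the Jacobian because $\log|\det(\partial f)|^{2}$ is pluriharmonic there, trace against $\omega_h$, apply the arithmetic--geometric mean inequality to the eigenvalues of $f^{*}(i\partial\bar\partial\log V)$ relative to $\omega_h$, and feed in the determinantal hypothesis to obtain, up to normalization constants, $\Delta_h\log u\ge nK_2^{1/n}u^{1/n}-K_1$ on $\{u>0\}$. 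That part is correct (modulo two harmless slips: the local object you pull back is the \emph{density} of $V$, not a potential; and for a non-K\"ahler Hermitian $h$ one must use the Chern Laplacian and Chern scalar curvature consistently, since they differ from the Riemannian ones by torsion terms).

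The genuine gap is the final step, which is exactly where the content of the theorem lies, and your own last sentence concedes it without closing it. The Omori--Yau generalized maximum principle, and equally the usual cutoff argument (multiplying by $(1-d(x_0,\cdot)^{2}/R^{2})^{2}$ and evaluating at an interior maximum), require an upper bound on $\Delta d$ for the distance function, which comes from Laplacian comparison and hence from a lower bound on \emph{Ricci}, not scalar, curvature. Passing to $w=u^{1/n}$ does give the Riccati-type inequality $\Delta_h w\ge K_2^{1/n}w^{2}-(K_1/n)w$, but the obstruction to localizing is unchanged, so the assertion that ``the trace bound together with completeness of $h$ is just enough to run a cutoff/barrier argument'' is not substantiated --- this is precisely the step that Yau and Mok--Yau handle with a specific auxiliary-function device exploiting the quadratic nonlinearity, and which cannot simply be waved through. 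As written, your argument proves the theorem only under the stronger hypothesis $\mathrm{Ric}(h)\ge -K_1$; note that the paper's Theorem \ref{extension} genuinely invokes the scalar-curvature version, so the weaker hypothesis cannot be discarded. Two minor further points: the conclusion $K_1>0$ does follow as you say once the maximum principle step is secured (since $\sup u>0$ by the nonvanishing of the Jacobian at one point); and one should remark that the differential inequality is only valid on the open set $\{u>0\}$, which is harmless because $\log u\to-\infty$ at its boundary, so the supremum of $u$ is not approached there.
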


Let $M=N=D_y$, $f=id$, $\omega_h=\widetilde{\omega}_y$, and $V:=(\omega^{KE}_y)^n$. Applying the above theorem for each fiber $D_y$, we have
$$
(\omega^{KE}_y)^n\leq C(\widetilde{\omega}_y)^n
$$
on $D_y$. 
\begin{remark}
Since we only need boundedness of volume forms locally, we can replace the condition in Theorem \ref{extension} as follows: suppose that for each point $x\in D\setminus D'$, there exists a neighborhood $U(x)$ of $x$, biholomorphic to the unit ball. Assume that the Poincar\'{e} metric $\widetilde{\omega}_P$ on $U(x)$ satisfies $Scal(\widetilde{\omega}_y)>C$, where $\widetilde{\omega}_y:=\widetilde{\omega}_P|_{U_y}$ and $U_y:=U(x)\cap p^{-1}(y)$. Then the above theorem implies that $(\omega^{KE}_y)^n\leq C(\widetilde{\omega}_y)^n$ on $U_y$.
\end{remark}

Now we only need to show that
$$
\int_{U_y}(\widetilde{\omega}_y)^n<C.
$$
The proof is completed by the following theorem of Diederich and Pinchuk.

\begin{theorem} [cf. Theorem 1.4 in \cite{Diederich_Pinchuk}]
Let $p:U\rightarrow V$ be a holomorphic surjective map between $U\subset\mathbb{C}^{n+d}$ and $V\subset\mathbb{C}^d$. Take any $\widetilde{U}\subset\subset U$ with $\widetilde{V}:=p(\widetilde{U})$. Then, there exists a uniform constant $C>0$ such that for all regular value $y\in \widetilde{V}$ of $p$,
$$
Vol(\widetilde{U}_y)\leq C,
$$
where $\widetilde{U}_y:=\widetilde{U}\cap p^{-1}(y)$ and $Vol$ is the $2n$-dimensional Hausdorff measure.
\end{theorem}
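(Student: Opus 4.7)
The plan is to reduce the global bound to a local volume estimate on coordinate polydiscs and then to combine Wirtinger's formula with a uniform bound on the number of sheets of $p$ above each polydisc. Since $\widetilde{U} \subset\subset U$, I would first shrink $U$ slightly so that $\overline{\widetilde{U}} \subset U$ is compact, then cover $\overline{\widetilde{U}}$ by finitely many open polydiscs $P_1, \ldots, P_k$ with $\overline{P_i} \subset U$; it would then suffice to bound $\vol(P_i \cap p^{-1}(y))$ uniformly in $y$ and in $i$.

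Fix $P_i$ and a reference regular point $q \in P_i$. After a linear change of coordinates on $\mathbb{C}^{n+d}$, I would arrange that $dp|_q$ is the projection $(z,w) \mapsto w$ onto the last $d$ coordinates. By the implicit function theorem, for $y$ close to $p(q)$, each connected component of $P_i' \cap p^{-1}(y)$ on a smaller polydisc $P_i' \subset\subset P_i$ is realized as a holomorphic graph $z \mapsto (z, w_y(z))$ over a fixed ball in $\mathbb{C}^n$, and Cauchy estimates give uniform control of $\partial w_y/\partial z$ on compactly contained subpolydiscs. Wirtinger's formula would then express the $2n$-volume of such a graph as
\[
\int \det\!\bigl(I + (\partial w_y/\partial z)^{\ast}\,(\partial w_y/\partial z)\bigr)\,dV_{2n}(z),
\]
which is uniformly bounded, so summing over the finitely many sheets yields the required local estimate.

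The main obstacle will be the uniform control of the sheet count of $p$ over $P_i$, which can in principle jump as $y$ approaches a singular value of $p$. I plan to handle this by applying Remmert's proper mapping theorem to the restriction $p|_{\overline{P_i}}$, so that $p(\overline{P_i})$ is a compact analytic subset of $V$, and then by invoking the upper semicontinuity of the number of irreducible components of fibers of a proper holomorphic map. Since $p(\overline{P_i})$ is compact, this yields a global upper bound on the number of sheets over any $y \in p(P_i)$; summing these bounds over the finite cover $\{P_i\}$ gives the desired uniform constant $C$. The delicate point is that one only needs a finite upper bound and not continuity of the sheet count, so upper semicontinuity on the compact image $p(\overline{P_i})$ suffices even though regular values may accumulate against the singular locus.
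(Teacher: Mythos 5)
The paper does not prove this statement: it is quoted verbatim as an external result of Diederich and Pinchuk, so there is no internal proof to compare against. Judged on its own merits, your proposal has the right first reduction (cover $\overline{\widetilde{U}}$ by finitely many polydiscs and use Wirtinger's formula, which bounds the $2n$-volume of $\widetilde{U}_y$ by $\sum_{|I|=n}\sup_t \#\bigl(\widetilde{U}_y\cap\pi_I^{-1}(t)\bigr)\cdot\vol(\pi_I(\widetilde{U}_y))$ over coordinate $n$-planes), but both of the steps you use to close the argument are flawed. First, the graph description via the implicit function theorem is only valid near the chosen reference point $q$ and for $y$ near $p(q)$: a regular fiber $p^{-1}(y)\cap P_i$ need not be a union of graphs over the single coordinate plane selected by $dp|_q$ (the kernel of $dp$ rotates along the fiber), and Cauchy estimates give no uniform bound on $\partial w_y/\partial z$ unless you already know each $w_y$ extends to and is bounded on a fixed larger polydisc --- which is exactly what fails when a sheet becomes steep or exits through the vertical boundary. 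The Wirtinger route does not need such derivative bounds, but it does need the sheet counts $\#\bigl(\widetilde{U}_y\cap\pi_I^{-1}(t)\bigr)$ to be uniformly bounded, and that is the real content of the theorem.

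Second, your mechanism for bounding the sheet count does not work. Remmert's proper mapping theorem does not apply to $p|_{\overline{P_i}}$ in any useful way: $p(\overline{P_i})$ is just a compact subset of $V$ (generically with nonempty interior, since $p$ is a submersion somewhere), not an analytic subset, and the number of irreducible components of a fiber neither controls the number of points of $\widetilde{U}_y\cap L$ for an affine $d$-plane $L=\pi_I^{-1}(t)$ nor enjoys the kind of upper semicontinuity you invoke. The genuine difficulty, which your plan assumes away, is that $y$ ranges over regular values that may accumulate at singular values of $p$, where the fiber dimension can jump; a priori the cardinality of the discrete sets $p^{-1}(y)\cap\pi_I^{-1}(t)\cap\widetilde{U}$ (equivalently, the number of isolated solutions in $\widetilde{U}$ of the system $p(x)=y$, $\pi_I(x)=t$) could blow up as $(y,t)$ approaches the degeneracy locus. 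Establishing that it does not --- a uniform, Bezout-type finiteness statement for the family of analytic fibers over a compactly contained region --- is precisely what Diederich and Pinchuk prove, and it requires a substantive argument (analysis of the incidence variety and an induction on dimension), not a citation of general semicontinuity principles. As written, the proposal therefore presupposes the theorem's key point rather than proving it.
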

\begin{remark}
It is well-known that the $2n$-dimensional Hausdorff measure coincides with the (Riemannian) volume of a $2n$-dimensional submanifolds with respect to the Euclidean metric. 
\end{remark}



\begin{thebibliography} {2010}

\bibitem{Berndtsson_Paun} B. Berndtsson, M. Păun, \emph{Quantitative extensions of pluricanonical forms and closed positive currents}, Nagoya Math. J. 205 (2012), 25--65.

\bibitem{Cao_Guenancia_Paun} J. Cao, H. Guenancia, M. P\u aun, \emph{Variation of singular K\"{a}hler-Einstein metrics}, arXiv:1710.01825.


\bibitem{Cheng_Yau} S.-Y. Cheng, S.-T. Yau, \emph{On the existence of a complete K\"{a}hler metric on noncompact complex manifolds and the regularity of Fefferman's equation}, Comm. Pure Appl. Math. 33 (1980), no. 4, 507--544.

\bibitem{Choi1} Y.-J. Choi, \emph{Variations of K\"{a}hler-Einstein metrics on stronlgy pseudoconvex domains}, Math. Ann. 362 (2015), 121--146.

\bibitem{Choi2} Y.-J. Choi, \emph{A study of variations of pseudoconvex domains via K\"{a}hler-Einstein metrics}, Math. Z. 281 (2015), 299--314.

\bibitem{Choi_ych} Y.-J. Choi, G. Schumacher \emph{Extension of the curvature form of the relative canonical line bundle on family of Calabi-Yau manifolds and applications}, arXiv:1804.00895

\bibitem{Coevering} C. van Coevering, \emph{K\"{a}hler-Einstein metrics on strictly pseudoconvex domains}, Ann. Glob. Anal. Geom. 42 (2012), 287--315

\bibitem{Diederich_Pinchuk} K. Diederich, S. Pinchuk \emph{Uniform Volume Estimates for Holomorphic Families of Analytic Sets}, Tr. Mat. Inst. Steklova. 235 (2001), 52--56

\bibitem{Lee_Melrose} J. M. Lee, R. Melrose, \emph{Boundary behaviour of the complex Monge-Amp\`{e}re equation}, Acta Math. 148 (1982), 159--192.

\bibitem{Mok_Yau} N. Mok, S.-T. Yau, \emph{Completeness of the K\"{a}hler-Einstein metric on bounded domains and the characterization of domains of holomorphy by curvature conditions}
, The mathematical heritage of Henri Poincar\'{e}, Part 1 (Bloomington, Ind., 1980), 41–-59, 
Proc. Sympos. Pure Math., 39, Amer. Math. Soc., Providence, RI, 1983.

\bibitem{Paun} M. P\u{a}un, \emph{Relative adjoint transcendental classes and Albanese map of compact K\"{a}hler manifolds with nef Ricci curvature}, Higher Dimensional Algebraic Geometry: In honour of Professor Yujiro Kawamata's sixtieth birthday. Mathematical Society of Japan, 2017.

\bibitem{Saeki} O. Saeki, \emph{Topology of Singular Fibers of Differentiable Maps}, Lecture notes in Mathematics, Springer, 2004

\bibitem{Schumacher}  G. Schumacher, \emph{Positivity of relative canonical bundles and applications}, Invent. Math. 190 (2012), no. 1, 1--56.

\bibitem{Siu} Y-.T. Siu, \emph{Curvature of the Weil-Petersson metric in the moduli space of compact K\"{a}hlerx-Einstein manifolds of negative first Chern class}, Contri- butions to several complex variables, Hon. W. Stoll, Proc. Conf. Complex Analysis, Notre Dame/Indiana 1984, Aspects Math. E9, 261--298 (1986)


\end{thebibliography}
\end{document}